\theoremstyle{definition}
\newtheorem{defi}{Definition}
\theoremstyle{plain}
\newtheorem{thm}{Theorem}
\newtheorem{crl}{Corollary}
\newtheorem{lmm}{Lemma}
\theoremstyle{remark}
\theoremstyle{plain}
\newtheorem{prp}{Proposition}
\theoremstyle{plain}
\theoremstyle{plain}
\newenvironment{customthm}[1]
  {\innercustomthm}
  {\endinnercustomthm}
\patchcmd\maketitle{\def\@makefnmark{\rlap{\@textsuperscript{\normalfont\@thefnmark}}}}{}{}{}
\def\thanksAAffil#1{
  \footnotemarkAAffil\protected@xdef\@thanks{\@thanks%
        \protect\footnotetextAAffil[\the \c@footnoteAAffil]{#1}}%
}
\def\thanksANote#1{%
  \footnotemarkANote%
  \protected@xdef\@thanks{\@thanks%
        \protect\footnotetextANote[\the \c@footnoteANote]{#1}}%
}
\begin{document}
\title{A Geometric Approach for Multivariate Jumps Detection}
\author{Hugo Henneuse \thanksAAffil{Laboratoire de Mathématiques d'Orsay, Université Paris-Saclay, Orsay, France}$^{\text{ ,}}$\thanksAAffil{DataShape, Inria Saclay, Palaiseau, France}}
\maketitle
\begin{abstract}
Our study addresses the inference of jumps (i.e. sets of discontinuities) within multivariate signals from noisy observations in the non-parametric regression setting. Departing from standard analytical approaches, we propose a new framework, based on geometric control over the set of discontinuities. This allows to consider larger classes of signals, of any dimension, with potentially wild discontinuities (exhibiting, for example, self-intersections and corners). We study a simple estimation procedure relying on histogram differences and show its consistency and near-optimality for the Hausdorff distance over these new classes. Furthermore, exploiting the assumptions on the geometry of jumps, we design procedures to infer consistently the homology groups of the jumps locations and the persistence diagrams from the induced offset filtration.
\end{abstract}
\section*{Introduction}
 Inferring discontinuities from noisy observations of an unknown signal arises in a variety of practical applications and has become a classical subject in non-parametric statistics. A common framework to formalize this problem is to consider the non-parametric regression setting (with fixed regular design), where we observe $n=N^{d}$ points, 
\begin{equation}
    X_{i}=f(x_{i})+\sigma\varepsilon_{i}
    \label{eq: reg model}
\end{equation}
with $(x_{i})_{1\leq i\leq n}$ points of the regular $N^{d}$ grid $G_{n}$ over $[0,1]^{d}$, $\sigma$ the level of noise, $f:[0,1]^{d}\rightarrow \mathbb{R}$ the signal, and where we suppose that $(\varepsilon_{i})_{1\leq i\leq n}$ are independent standard Gaussian variables. In this setting, the main object that we want to estimate is then the set of jumps (or discontinuities) of $f$, i.e.,
$$D_{f}=\left\{x\in[0,1]^{d}\text{ s.t. } \limsup\limits_{y\rightarrow x}f(y)-\liminf\limits_{y\rightarrow x}f(y)> 0\right\}.$$

 A huge body of work has been dedicated to univariate signals, making the assumptions that it admits a unique (or a finite number) of discontinuities to estimate its location and magnitude. Different estimators have been proposed, including using split linear smoother \citep{Mcdonald86, Hall92}, differences between one-sided kernels \citep{Muller92, Qiu92, Wu93, Chu96} or histograms \citep{Gayraud02}, relying on wavelets \citep{Wang95}, among others. \\\\
Motivated by applications to image processing, numerous works have also been dedicated to bivariate cases. Yet, in this case, the discontinuities can have a much more complex structure and thus restrictions are required. Since the development of non-parametric statistics is closely linked with the field of functional approximation, these restrictions generally involve analytic assumptions on the discontinuities.

For instance, \cite{korostelev93} propose a minimax estimation procedure in the Hausdorff distance and the symmetric differences distance, based on piecewise polynomial approximation, for signals of the form $f(x,y)=f_{1}(x,y)1_{y\leq g(x)}+f_{2}(x,y)1_{y> g(x)}$, assuming $f_{1}$, $f_{2}$ are continuous functions and $g$ is a $(L,\beta)-$Holder-continuous function. The set of discontinuity is then given by the image of $g$. Consequently, it does not permit to consider discontinuity sets with multiple points. Under similar restriction on the jumps set, they also show that their approach generalizes to higher dimensional setting.

\cite{Sullivan94} propose a method based on contrast statistics to detect jumps, supposing that the discontinuity sets is a smooth, closed and simple parametrized curve, which is also restrictive (e.g. this does not permit to consider multiple points or corners).

\cite{MULLER1994} propose an estimator based on kernel differences. Their approach is not specific to the bivariate cases and can be applied in any dimension, but suppose that the discontinuities set separate $[0,1]^{d}$ into exactly two connected components, which once again does not allow for multiple points.

\cite{Wang98} offers a  broader framework, allowing for signal with discontinuities sets with multiple points, but supposes knowledge on their numbers. The estimation make use of 2D Daubechies wavelet, and is shown to achieve nearly minimax rates for the Hausdorff distance.

More recently, \cite{Bengs19}, revisit the approach of \cite{MULLER1994} and provide confidence sets, in the bivariate case, with similar assumptions to those described by \cite{korostelev93}.\\\\
All the works previously mentioned consider discontinuities sets as (regular) curves or finite union of (regular) curves (supposing we know their number), and estimates those curves. A different approach, as in \cite{Qiu97, Qiu02, bookQiu, Qiu14}, consider discontinuities as a points set, and estimates it by a points set. This allows to consider much general discontinuities sets. The proposed procedures in these papers are based on kernels differences and are shown to consistently estimates, in Hausdorff distance, the discontinuities outside arbitrary small neighborhoods of "singular points", which typically includes corners and multiple points.

A variant of this approach is proposed by \cite{Muller07}, using $M-$kernel estimators instead of classical kernel estimators, showing better numerical robustness. \\\\ 
Outside the non-parametric statistics sphere, this problem has also received a considerable attention, particularly in the field of computer vision, where edge or contour detection is a central problem. Historical approaches make use of ``Filter operators'' \citep{Roberts65,Prewitt70,Sobel74,Canny86} and can be connected to several works cited previously. Nowadays, competitive methods involve neural network \citep[see e.g. the recent survey by][]{SurveyEdge22}. Although, those procedures outperform significantly others in practice, they suffer from a lack of theoretical understanding. Therefore, we believe it remains valuable to explore simpler approaches where we can provide convergence guarantees with quantified rates.
\subsection*{Contribution}
As highlighted previously, a difficulty to estimate $D_{f}$ is to handle (at least from a theoretical point of view) singular points (e.g. multiple points, corners, cusps, ...). To overcome this issue, we  adopt in this paper a geometric approach. We introduce large classes of piecewise-continuous functions, with control over the geometry of $D_{f}$. We then propose a simple estimation procedure based on histogram difference to estimate $D_{f}$ in Hausdorff distance. We establish that this approach is consistent and achieves nearly minimax rates over the classes we introduced. Furthermore, we show that our framework allows to recover consistently geometrical and topological information about $D_{f}$. More precisely, we derive procedures to infer the homology groups of $D_{f}$ and the persistence diagrams coming from its offset filtration.\\\\
The paper is organized as follows : Section \ref{sec: Framework} describes the formal framework adopted in this paper, Section \ref{sec: jumps} is dedicated to the estimation of $D_{f}$, Section \ref{sec: homology} to the estimation of its homology groups and Section \ref{sec: PH} to the estimation of the persistence diagram from its offsets. Proofs and auxiliary results can be found in the appendix.
\section{Framework}
In this section, we describe the formal framework of this work. We consider the non-parametric regression design, with fixed design defined by (\ref{eq: reg model}). For a set $K\subset[0,1]^{d}$, we denote $\overline{A}$ its adherence and $\partial A$ its boundary. We suppose that $f$ verifies the following assumptions :
\begin{itemize}
    \item \textbf{A1.} $f$ is a piecewise uniformly-continuous function for the continuity modulus $\omega$, i.e. there exist $M_{1},...,M_{l}$ open sets of $[0,1]^{d}$ such that,
$$\bigcup\limits_{i=1}^{l}\overline{M_{i}}=[0,1]^{d}$$ and for all $i\in\{1,...,l\}$ and $x,y\in M_{i}$,
$$|f(x)-f(y)|\leq \omega(||x-y||_{2})\text{ with } \lim\limits_{t\rightarrow 0}\omega(t)=0.$$
\item \textbf{A2.} $f$ verifies, $\forall x_{0}\in [0,1]^{d}$, there exists $i\in\{1,...,l\}$, such that
$$\underset{x\in M_{i}\rightarrow x_{0}}{\liminf}f(x)=f(x_{0}).$$
In this context, two signals, differing only on a null set, are statistically undistinguishable and thus are their discontinuities. Assumption \textbf{A2} prevents such scenario.
\item \textbf{A3.} for $\mu\in]0,1]$ and $R_{\mu}>0$,
$$\operatorname{reach}_{\mu}\left(\bigcup\limits_{i=1}^{l}\partial M_{i}\right)\geq R_{\mu}.$$
This condition involves a common measure in geometric inference, the $\mu$-reach \citep{mureach}. Let a compact set $K$, denote the distance function $d_{K}:x\mapsto \min_{y\in K}||x-y||_{2}$ and $\theta_{K}(x)$ the center of the unique smallest ball enclosing all the closest point from $x$ in $K$. We define the generalized gradient of the distance function by :
$$\nabla_{K}(x)=\frac{x-\theta_{K}(x)}{d_{k}(x)}.$$
The $\mu-$reach of $K$ is then defined by :
$$\operatorname{reach}_\mu(K)=\inf \left\{r \mid \inf_{d_K^{-1}(r)}\left\|\nabla_K\right\|_{2}<\mu\right\}.$$
Although this condition is satisfied for large classes of function (e.g. all piecewise linear compact sets have positive $\mu-$reach, for some $\mu>0$), it is a key in this work, providing sufficient control to infer $D_{f}$ (and its topology) consistently. Notably, this assumption allows $D_{f}$ to display corners and multiple points, cases that were excluded in most works mentioned earlier.
\item \textbf{A4.} there exists $l>0$ such that for all $x\in \bigcup\limits_{i=1}^{l}\partial M_{i}$,
$$\limsup\limits_{y\rightarrow x}f(y)-\liminf\limits_{y\rightarrow x}f(y)> l.$$
It is somehow necessary to suppose a lower bound on the magnitude of discontinuities, as else estimating the discontinuities set is, in general, not possible. Even in the case where $f$ is univariate and continuous except at one point, taking the magnitude of the jump arbitrarily small, $f$ is undistinguishable with high certainty from a continuous signal.
\end{itemize}
The classes of functions verifying \textbf{A1},\textbf{A2},\textbf{A3} and \textbf{A4} is denoted $\mathcal{F}_{d}(\omega,\mu,R_{\mu},l)$.\\\\
To evaluate the performance of our estimator, following the choice of most of the papers cited in the introduction, we consider the Hausdorff distance. More precisely, let $A,B\subset[0,1]^{d}$ two compact sets, the Hausdorff distance between $A$ and $B$ is defined by :
$$d_{H}(A,B)=\max\left(\sup_{a\in A}\inf_{b\in B}||a-b||_{2},\sup_{b\in B}\inf_{a\in A}||a-b||_{2}\right).$$
\label{sec: Framework}
\section{Jumps estimation}
\label{sec: jumps}
We propose a simple estimation procedure for the jumps set based on histogram differences. We suppose that $l$ is known, i.e. knowing some lower bound on the magnitude of the jumps. Let $h>0$, $G_{h}$ the regular grid over $[0,1]^{d}$ of step $h$ and $C_{h}$ the collection of closed hypercubes composing this grid. Consider the histogram estimator of $f$,
$$\hat{f}(x)=\frac{1}{|\{x_{i}\in H(x)\}|}\sum\limits_{x_i\in H(x)}X_{i}$$
with $H(x)$ the hypercube of $C_{h}$ containing $x$. Using this estimator of $f$ we define our estimator of $D_{f}$. Let $r>0$, we introduce the local range function, defined for $x\in[0,1]^{d}$ and a function $f$ by,
$$\operatorname{LR}(f,x)= \limsup\limits_{y\in H(x)^{r}\rightarrow x}f(y)-\liminf\limits_{y\in H(x)^{r}\rightarrow x}f(y).$$
denoting, for a set $A\subset[0,1]^{d}$ and $r>0$, $A^{r}=\{x\in[0,1]^{d}\text{ s.t. }d_{2}(x,A)\leq r\}$. We then estimate $D_{f}$ by,
$$\hat{D}_{f}(r,h)=\left\{x\in[0,1]^{d} \text{ s.t. }\operatorname{LR}(\hat{f},x)\geq l/2\right\}.$$

\begin{figure}[H]
\centering
    \begin{subfigure}{0.35\textwidth}
        \centering
        \includegraphics[scale=0.45]{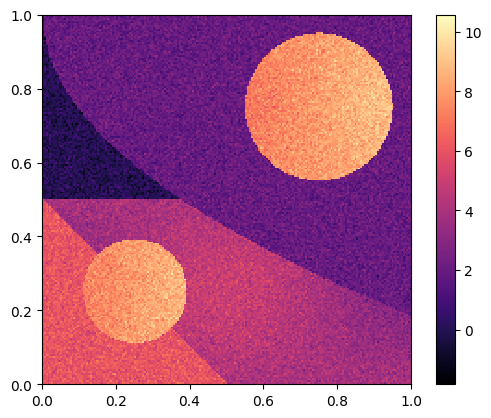}
        \caption{Noisy observation $X$}
    \end{subfigure}
\hfil
    \begin{subfigure}{0.35\textwidth}
    \centering
    \includegraphics[scale=0.45]{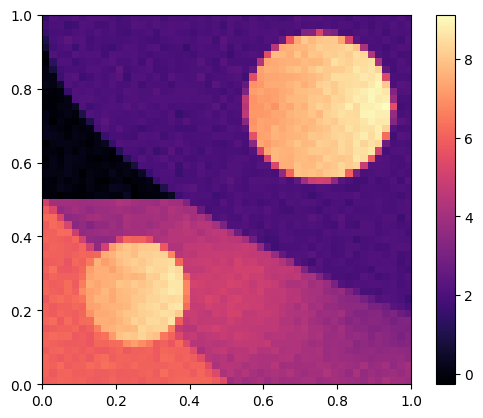}
    \caption{Histogram estimator $\hat{f}$}
\end{subfigure}

    \begin{subfigure}{0.35\textwidth}
        \centering
        \includegraphics[scale=0.45]{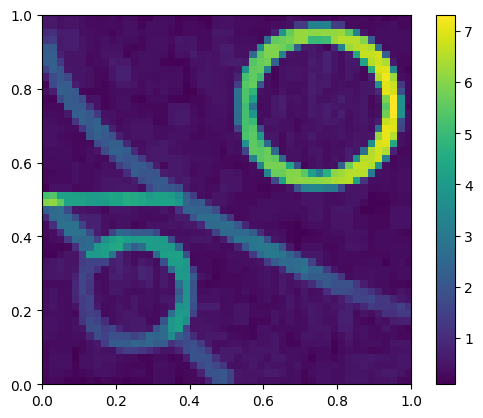}
        \caption{Local range function $\operatorname{LR}(\hat{f},.)$}
    \end{subfigure}
\hfil
    \begin{subfigure}{0.35\textwidth}
    \centering
    \includegraphics[scale=0.44]{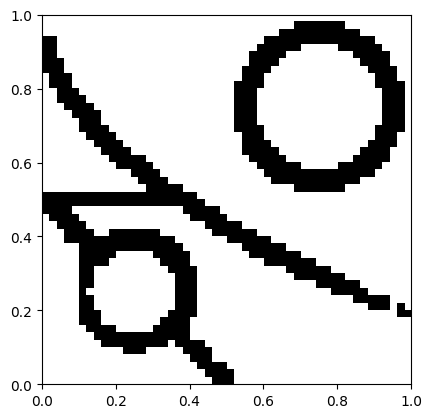}
    \caption{Estimated jumps set $\hat{D}_{f}$}
\end{subfigure}
    \caption{2D numerical illustration of the estimation procedure.}
    \label{Fig: example 1}
\end{figure}
 For a proper choice of $h$ and $r$, our estimator is consistent and achieves the rates provided by Theorem \ref{thm: borne sup}. Interestingly, this rates only depend on $n$ and $d$, the dependence on the parameters $\omega$, $\mu$, $R_{\mu}$ and $l$ only appearing in the constant.
\begin{thm}
\label{thm: borne sup}
Choosing,
$$h_{n}=\begin{cases}
    2\left(512\sigma^{2}/l^{2}\right)^{1/d}\left(\log\left(n^{2}\right)/n\right)^{1/d} \text{ if } \sigma\text{ is known}\\
    s_{n}\left(\log\left(n^{2}\right)/n\right)^{1/d}\text{ with } s_{n}  \text{ a diverging sequence , else.}
\end{cases}$$
and 
$$r_{n}=\begin{cases}
\left(1+\sqrt{d}\right)h_{n}/\mu \text{ if } \mu\text{ is known}\\
s_{n}h_{n} \text{ with } s_{n}  \text{ a diverging sequence, else.}
\end{cases}$$ 
for sufficiently large $n$,
$$\mathbb{P}\left(\sup\limits_{f\in\mathcal{F}_{d}(\omega,\mu,R_{\mu},l)}d_{H}\left(\hat{D}_{f},D_{f}\right)\geq 2r_{n}\right)\leq 2r_{n}.$$
\end{thm}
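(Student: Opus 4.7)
The plan is to decouple the argument into a noise-concentration step (producing a good event $\Omega_n$ depending only on $(\varepsilon_i)$, so the supremum over $f$ is trivially handled) and a deterministic geometric step (showing both Hausdorff inclusions hold on $\Omega_n$). One first notes that \textbf{A1}--\textbf{A2}--\textbf{A4} together force $D_f=\bigcup_i\partial M_i$, so the $\mu$-reach hypothesis \textbf{A3} applies directly to $D_f$.

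For the noise step, take $\Omega_n$ to be the event
\[
\max_{H\in C_{h_n}}\bigl|\hat{f}_H-\bar{f}_H\bigr|\leq l/16,\qquad \bar{f}_H:=m_H^{-1}\sum_{x_i\in H}f(x_i),\qquad m_H\asymp nh_n^d,
\]
where $\hat{f}_H$ denotes the constant value of $\hat f$ on cell $H$. Since $\hat{f}_H-\bar{f}_H=\sigma m_H^{-1}\sum_{x_i\in H}\varepsilon_i$ is a centered Gaussian of variance $\sigma^2/m_H$ independent of $f$, the Gaussian tail gives a per-cell failure probability $\leq 2\exp\bigl(-m_Hl^2/(512\sigma^2)\bigr)$. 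The stated $h_n$ is calibrated so that $m_Hl^2/(512\sigma^2)\geq 2^{d+1}\log n$, so a union bound over the $\lesssim h_n^{-d}\lesssim n/\log n$ cells yields $\mathbb{P}(\Omega_n^c)\lesssim n^{-3}$, far below $2r_n$.

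For the deterministic step the backbone observation on $\Omega_n$ is that any cell $H$ disjoint from $D_f$ is contained in a single $M_i$ by connectedness, so uniform continuity combined with $\Omega_n$ yields $|\hat{f}_H-f(x)|\leq l/16+\omega(h_n\sqrt d)$ for every $x\in H$. From this I would derive both inclusions. For $\hat D_f\subset D_f^{2r_n}$: if $d(x,D_f)>2r_n$, every cell meeting $H(x)^{r_n}$ lies inside $B(x,r_n+2h_n\sqrt d)$ and so avoids $D_f$; the connected union of these cells sits in a single $M_i$, forcing $\operatorname{LR}(\hat f,x)\leq l/8+\omega(\operatorname{diam})<l/2$ for $n$ large, hence $x\notin\hat D_f$. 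For $D_f\subset\hat D_f^{2r_n}$ (in fact $D_f\subset\hat D_f$): given $y\in D_f$, \textbf{A4} together with the uniform continuity on each $M_i$ supplies $i\neq j$ and boundary limits $a,b$ of $f$ at $y$ with $|a-b|>l$; the $\mu$-reach hypothesis then produces, for $n$ large, points $z_i\in M_i$ and $z_j\in M_j$ with $\|z_\bullet-y\|_2\leq h_n\sqrt d/\mu$ and $d(z_\bullet,D_f)>h_n\sqrt d$, so the cells $H(z_i),H(z_j)$ sit fully inside $M_i,M_j$ and, by the choice $r_n=(1+\sqrt d)h_n/\mu\geq h_n\sqrt d/\mu$, both intersect $H(y)^{r_n}$. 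Applying the backbone observation to these cells gives $\operatorname{LR}(\hat f,y)\geq|a-b|-l/8-2\omega(\cdot)>l/2$.

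The main obstacle is the geometric ``two-sided accessibility'' lemma invoked in the second inclusion: for every $y\in\partial M_i\cap\partial M_j$ and each $k\in\{i,j\}$, one must exhibit a point of $M_k$ at distance $>h_n\sqrt d$ from $D_f$ within distance $h_n\sqrt d/\mu$ of $y$. This is the only genuine use of the $\mu$-reach hypothesis, and I expect it to follow from the characterization of $\operatorname{reach}_\mu$ via the norm of the generalized gradient $\nabla_{D_f}$, which lets one flow off each sheet of $D_f$ at speed at least $\mu$. The constants in the first inclusion are tuned so the term $r_n+2h_n\sqrt d$ arising from the argument is absorbed into $2r_n$, and the unknown-$\sigma$/unknown-$\mu$ regimes are handled identically after inflating $h_n$ (resp.\ $r_n$) by the diverging sequence $s_n$, which absorbs the unspecified constants at no cost to the rate up to a logarithmic factor.
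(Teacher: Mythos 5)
Your overall strategy tracks the paper's quite closely: a noise-only concentration event, followed by a deterministic two-sided inclusion argument using $\mu$-reach to escape from $D_f$ into the interior of each adjacent $M_i$. The concentration step (Gaussian tail plus union bound over $\lesssim h_n^{-d}$ cells) is essentially the paper's Lemma \ref{lmm:th1}, phrased slightly more cleanly since you compare $\hat f_H$ to the in-cell grid average $\bar f_H$ rather than to $f$ itself; the paper then absorbs the bias $\omega(\sqrt d\,h_n)$ at the same place you do. The inclusion $\hat D_f\subset D_f^{2r_n}$ is the same argument, with the same minor constant bookkeeping (every cell touching $H(x)^{r_n}$ is already away from $D_f$).

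The genuine gap is exactly the one you flag: the ``two-sided accessibility lemma''. You leave it at ``I expect it to follow from the characterization of $\operatorname{reach}_\mu$ via $\nabla_{D_f}$'', but this is the single load-bearing geometric step, and in the form you state it (escape from $D_f$ into $M_k$, not merely away from $D_f$) it is not an off-the-shelf consequence of the $\mu$-reach bound on $D_f$ alone: one must also argue that the escape point lands in the \emph{correct} component $M_k$. The paper takes a cleaner route that sidesteps this: it cites Lemma 3.1 of \citet{SSDO07} and applies it not to $K=D_f$ but to $K=M_i^c$, noting that \textbf{A3} implies $\operatorname{reach}_\mu(M_i^c)\ge R_\mu$ (for $x\in M_i$, the nearest points of $M_i^c$ all lie on $\partial M_i\subset D_f$, so $\nabla_{M_i^c}$ agrees with $\nabla_{D_f}$ there). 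That lemma then hands you, for any $x\in\partial M_i$, a point $y$ with $d(y,M_i^c)\ge(1+\sqrt d)h_n$ and $\|x-y\|\le r_n$, whose enclosing cell is automatically inside $M_i$ — exactly your $z_i$, with no component-tracking needed. Your alternative via a gradient flow of $d_{D_f}$ can also be made to work (along the flow $d_{D_f}$ is nondecreasing, so you never cross $D_f$ and hence stay in the connected component of $[0,1]^d\setminus D_f$ you started in), but you would have to construct and bound that flow explicitly; as written this is an unproven claim, not a proof. Everything else in your proposal is a sound reorganization of the paper's argument.
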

As $s_{n}$ can be chosen to diverge arbitrary slowly, the obtained convergence rate $r_{n}$ essentially matches the rate obtained in \cite{korostelev93} in the case where $g$ is Lipschitz continuous. From the previous theorem, we can derive a bound in expectation, as stated in Corollary \ref{coro: upper bound}. Both the proof of Theorem \ref{thm: borne sup} and Corollary \ref{coro: upper bound} can be found in Appendix \ref{proof th1}.
\begin{crl}
\label{coro: upper bound}
Choosing $h_{n}$ and $r_{n}$ as in Theorem \ref{thm: borne sup}, we have,
$$\sup\limits_{f\in\mathcal{F}_{d}(\omega,\mu,R_{\mu},l)}\mathbb{E}\left[d_{H}\left(\hat{D}_{f},D_{f}\right)\right]\lesssim r_{n}.$$
\end{crl}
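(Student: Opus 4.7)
The plan is to derive the expectation bound directly from Theorem \ref{thm: borne sup} through the standard ``truncation at the high-probability event'' argument. Fix $f\in\mathcal{F}_d(\omega,\mu,R_\mu,l)$ and set $Z = d_H(\hat{D}_f, D_f)$. I would decompose
$$\mathbb{E}[Z] = \mathbb{E}\bigl[Z \mathbf{1}_{\{Z < 2r_n\}}\bigr] + \mathbb{E}\bigl[Z \mathbf{1}_{\{Z \geq 2r_n\}}\bigr].$$
On the first event, $Z$ is pointwise bounded by $2r_n$, so the first term contributes at most $2r_n$. For the second term I would use the crucial geometric observation that both $\hat{D}_f$ and $D_f$ live inside $[0,1]^d$, and hence (whenever both sets are non-empty) $Z \leq \operatorname{diam}([0,1]^d) = \sqrt{d}$.

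Combining this deterministic envelope with the probability estimate of Theorem \ref{thm: borne sup} gives
$$\mathbb{E}\bigl[Z \mathbf{1}_{\{Z \geq 2r_n\}}\bigr] \leq \sqrt{d}\,\mathbb{P}(Z \geq 2r_n) \leq 2\sqrt{d}\, r_n,$$
and summing yields $\mathbb{E}[Z] \leq 2(1+\sqrt{d})\, r_n$. Since this upper bound is independent of $f$, taking the supremum over $\mathcal{F}_d(\omega,\mu,R_\mu,l)$ on the left-hand side preserves it, giving exactly $\sup_f \mathbb{E}[d_H(\hat{D}_f, D_f)] \lesssim r_n$, where the hidden constant depends only on $d$.

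Nothing in this argument is subtle; the only cosmetic points to address are (i) fixing a convention for $d_H$ if $\hat{D}_f = \emptyset$, which is a negligible event for large $n$ under assumptions \textbf{A1}--\textbf{A4} (one can alternatively clip $\hat{D}_f$ so as to be non-empty without affecting the rate), and (ii) noting that one can pull the supremum over $f$ out of the expectation because the probability bound of Theorem \ref{thm: borne sup} is itself uniform in $f$. Thus there is no real obstacle: the corollary is a two-line consequence of the high-probability bound together with the bounded diameter of $[0,1]^d$.
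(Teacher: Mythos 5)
Your argument is exactly the paper's proof: decompose the expectation over the event $\{d_H(\hat{D}_f,D_f)\le 2r_n\}$ and its complement, bound the first term by $2r_n$, bound the second by $\sqrt{d}\,\mathbb{P}(E_n^c)\le 2\sqrt{d}\,r_n$ using the diameter of $[0,1]^d$, and note the bound is uniform in $f$. Correct and the same route.
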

 Now, Theorem \ref{thm: lower bounds} ensures that no estimator can achieve faster rates than $(\log(n)/n)^{1/d}$ and thus proves that our estimation approach is (arbitrarily) nearly minimax. Its proof can be found in Appendix \ref{Proof th2}.
\begin{thm}
\label{thm: lower bounds}
$$\inf\limits_{\hat{D}_{f}}\sup\limits_{f\in\mathcal{F}_{d}(\omega,\mu,R_{\mu},l)}\mathbb{E}\left[d_{H}\left(\hat{D}_{f},D_{f}\right)\right]\gtrsim \left(\log(n)/n\right)^{1/d}.$$
\end{thm}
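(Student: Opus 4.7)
The plan is to apply Fano's inequality to a family of hypotheses built by locally perturbing the jump surface of a base function, and then convert the resulting testing lower bound into a Hausdorff-risk lower bound via a standard triangle-inequality reduction. Set $\delta_n = c(\log(n)/n)^{1/d}$ for a small constant $c>0$ to be chosen, and take $f_0$ piecewise constant equal to $2l$ on $\{x_d<1/2\}$ and to $0$ on $\{x_d \geq 1/2\}$, so that $D_{f_0}$ is the hyperplane segment $\{x_d = 1/2\} \cap [0,1]^d$. The assumptions \textbf{A1}--\textbf{A4} are immediate for $f_0$ (the hyperplane has infinite $\mu$-reach, and the jump magnitude $2l$ strictly exceeds $l$), so $f_0 \in \mathcal{F}_d(\omega,\mu,R_\mu,l)$.

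Next, I would partition a subregion of the base hyperplane into $N \asymp \delta_n^{-(d-1)}$ disjoint cells $B_1,\dots,B_N$ of diameter $\asymp \delta_n$ and, for each $i$, define $f_i$ by lifting the jump surface vertically by $\delta_n$ over $B_i$ and keeping it flat elsewhere. The lift is realized as a piecewise-linear plateau whose facets meet at angles depending only on $\mu$ (and bounded away from $\pi$), so that the perturbed jump set still has $\mu$-reach at least $R_\mu$; this is the only place where the class $\mathcal{F}_d(\omega,\mu,R_\mu,l)$ meaningfully constrains the construction. By design, $d_H(D_{f_i}, D_{f_j}) \geq \delta_n$ for all distinct $i,j \in \{0,1,\dots,N\}$. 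Moreover, $f_i - f_0$ is supported on a region of volume $\lesssim \delta_n^d$ with $|f_i - f_0| \leq 2l$ pointwise, hence
$$\mathrm{KL}(P_{f_i} \,\|\, P_{f_0}) = \frac{1}{2\sigma^2}\sum_{j=1}^{n}\bigl(f_i(x_j) - f_0(x_j)\bigr)^2 \lesssim \frac{n\,l^2\,\delta_n^d}{\sigma^2} = \frac{c^d\,l^2}{\sigma^2}\log(n).$$
Since $\log N \asymp \log(n)$, choosing $c$ small enough enforces $\max_i \mathrm{KL}(P_{f_i}\,\|\,P_{f_0}) \leq (\log N)/4$.

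Fano's inequality then yields $\max_{i} P_{f_i}(\hat{I} \neq i) \geq 1/2$ for every test $\hat{I}$ valued in $\{0,\dots,N\}$ and $n$ large enough. To transfer this to Hausdorff risk, given any estimator $\hat{D}$ define $\hat{I} = \arg\min_i d_H(\hat{D}, D_{f_i})$: whenever $d_H(\hat{D}, D_{f_i}) < \delta_n/2$, the triangle inequality combined with $d_H(D_{f_i}, D_{f_j}) \geq \delta_n$ forces $\hat{I} = i$, and Markov's inequality gives
$$\mathbb{E}_{f_i}[d_H(\hat{D}, D_{f_i})] \geq (\delta_n/2)\,P_{f_i}\bigl(d_H(\hat{D},D_{f_i}) \geq \delta_n/2\bigr) \geq (\delta_n/2)\,P_{f_i}(\hat{I} \neq i).$$
Maximizing over $i$ and combining with Fano then yields $\sup_{i} \mathbb{E}_{f_i}[d_H(\hat{D}, D_{f_i})] \gtrsim \delta_n$, which is the claim. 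The main obstacle is the geometric step: explicitly building the local plateau perturbations so that the resulting discontinuity sets keep $\mu$-reach $\geq R_\mu$ uniformly in $n$. This forces the transition angles of the plateau to be dictated by $\mu$ and prevented from flattening as $\delta_n \to 0$; everything else is a routine combination of Fano's inequality with a Markov-type argument.
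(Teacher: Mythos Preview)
Your proposal is correct and follows essentially the same route as the paper: both reduce to a Fano--Tsybakov multiple-hypothesis argument by locally perturbing a flat jump hyperplane with bumps (you use truncated plateaus, the paper uses pyramids) whose face angles are constrained by $\mu$ so that the perturbed discontinuity set retains $\mu$-reach at least $R_\mu$. The only notable difference is cosmetic: you index perturbations over a $(d-1)$-dimensional grid of $\asymp \delta_n^{-(d-1)}$ cells whereas the paper slides a single pyramid along one axis to get $\asymp \delta_n^{-1}$ hypotheses, but in both cases $\log N \asymp \log n$ and the KL bound $\lesssim n\delta_n^d \asymp \log n$ is the same, so the Fano step is identical.
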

 Hence, if $\mu$ and $\sigma$ are supposed known, the estimator matches exactly the minimax rates. Otherwise, our procedure is still (arbitrarily) near minimax, for example we can take $s_{n}=\log(n)$ and the procedures achieve minimax rates up to a log factor. Note that the proposed calibration for $h$ and $r$ is chosen to achieve nearly minimax rates asymptotically, relevant practical choice may differ significantly, especially when $n$ is small.
\section{Homology inference}
\label{sec: homology}
One appeal of the geometric framework proposed in this paper, is that it allows to infer interesting geometric and topological information about $D_{f}$. We illustrate this point by proposing a procedure (based on the previous one) to estimate its homology groups. Roughly speaking, the $s-$th ($s\in\mathbb{N}$) homology group, $H_{s}(X)$, of a topological space $X$, describe the (algebraic) structure of the $s-$dimensional ``holes'' of $X$. For an introduction to homology, the reader can refer to chapter 3 of \cite{Hatcher}.\\\\
Denotes $H_{s}$ the (singular) homology functor of degree $s\in\mathbb{N}$, with coefficients in some fixed field $\mathbb{K}$. Due to cubical approximation, $H_{s}(\hat{D}_{f})$ may differ from $H_{s}(D_{f})$. Typically, cycles due to noise and cubical approximation can appear as illustrated in Figure \ref{fig: problem cycle}. We propose a method that permits to remove those problematic cycles, making use of image homology groups, that can be thought as a step of topological regularization.
 Let $\kappa>0$, and,
$$\operatorname{\rho_{s}}:H_{s}\left(\hat{D}_{f}\right)\rightarrow \left(\hat{D}_{f}^{\kappa}\right)$$
the map induced by the inclusion $\hat{D}_{f}\subset \hat{D}_{f}^{\kappa}$. Our estimator for $H_{s}(D_{f})$ is then defined by,
$$\widehat{H_{s}(D_{f})}=\operatorname{Im}\left(\operatorname{\rho_{s}}\right).$$
The following result then establishes the consistency of this procedure. Its proof can be found in Appendix \ref{Proof prp1}.
\begin{prp}
\label{prp: PH1}
Choosing $h_{n}$ and $r_{n}$ as in Theorem \ref{thm: borne sup}, and,
$$\kappa_{n}=\begin{cases}
2r_{n}/\mu^{2} \text{ if } \mu\text{ is known}\\
s_{n}r_{n} \text{ with } s_{n}  \text{ a diverging sequence, else.}
\end{cases}$$ 
we have, for sufficiently large $n$, for all $f\in \mathcal{F}_{d}(\omega,\mu,R_{\mu},l)$ and $s\in\mathbb{N}$,
$$\mathbb{P}\left(H_{s}(D_{f})=\widehat{H_{s}(D_{f})}\right)\geq 1-2r_{n}.$$
\end{prp}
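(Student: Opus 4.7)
The proof combines Theorem \ref{thm: borne sup} with classical reconstruction theory for compact sets of positive $\mu$-reach. First, I would condition on the event $E_n = \{d_H(\hat{D}_f, D_f) \leq 2r_n\}$, which by Theorem \ref{thm: borne sup} has probability at least $1 - 2r_n$. Under \textbf{A1}, \textbf{A2}, \textbf{A4}, any point interior to some $M_i$ is a continuity point of $f$, while every point of $\bigcup_i \partial M_i$ carries a jump of size greater than $l$; thus $D_f = \bigcup_{i=1}^{l} \partial M_i$, and \textbf{A3} gives $\operatorname{reach}_\mu(D_f) \geq R_\mu$.

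On $E_n$, the Hausdorff bound yields the interleaving inclusions $\hat{D}_f^{t} \subseteq D_f^{t+2r_n}$ and $D_f^{t} \subseteq \hat{D}_f^{t+2r_n}$ for every $t \geq 0$. Feeding these into the two offset filtrations produces a commuting ladder of inclusion-induced maps on $H_s$, from which one reads off a factorization of $\rho_s$ through homology groups of offsets of $D_f$ at scales lying in the stable range $(0, R_\mu)$. Applying the stability theorem of Chazal, Cohen-Steiner and Lieutier --- which states that every offset $D_f^{\eta}$ with $\eta \in (0, R_\mu)$ has the homotopy type of $D_f$ --- the standard image-persistence sandwich then gives $\operatorname{Im}(\rho_s) \cong H_s(D_f)$, provided $\kappa_n$ is large enough (of order $r_n/\mu^2$) to kill the noise cycles of $\hat{D}_f$ produced by the histogram and cubical approximation, and small enough that $\kappa_n + 2r_n < R_\mu$ so that both endpoints sit in the stable range of the $\mu$-reach of $D_f$.

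The main obstacle I expect is the quantitative bookkeeping of this window. One must verify that $\kappa_n = 2r_n/\mu^2$ (known $\mu$) or $\kappa_n = s_n r_n$ with $s_n \to \infty$ slowly (unknown $\mu$) falls inside the required stability interval for $n$ sufficiently large. The factor $2/\mu^2$ is precisely the scale at which the $\mu$-reach reconstruction theorem ensures both that $H_s(\hat{D}_f) \to H_s(\hat{D}_f^{\kappa_n})$ surjects onto the homology of a stable offset of $D_f$, and that its kernel contains only the non-persistent noise classes, so that the image recovers $H_s(D_f)$ exactly. Combining this deterministic conclusion on $E_n$ with $\mathbb{P}(E_n) \geq 1 - 2r_n$ from Theorem \ref{thm: borne sup} yields the announced probability bound.
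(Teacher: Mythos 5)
Your proposal is correct in substance and takes a genuinely cleaner route than the paper. Conditioning on $E_{n}$, you exploit the chain $D_{f}\subset \hat{D}_{f}\subset D_{f}^{2r_{n}}\subset \hat{D}_{f}^{\kappa_{n}}\subset D_{f}^{2r_{n}+\kappa_{n}}$ (the middle inclusion only needing $\kappa_{n}\geq 2r_{n}$, which holds since $\mu\leq 1$), observe that $H_{s}(D_{f})\to H_{s}(D_{f}^{2r_{n}})$ and $H_{s}(D_{f}^{2r_{n}})\to H_{s}(D_{f}^{2r_{n}+\kappa_{n}})$ are isomorphisms once $2r_{n}+\kappa_{n}<R_{\mu}$, and run the standard image-persistence sandwich: $H_{s}(\hat{D}_{f})\to H_{s}(D_{f}^{2r_{n}})$ is surjective, $H_{s}(D_{f}^{2r_{n}})\to H_{s}(\hat{D}_{f}^{\kappa_{n}})$ is injective, hence $\operatorname{Im}(\rho_{s})\cong H_{s}(D_{f})$. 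The paper instead constructs explicit mutually inverse maps $i$ and $j$ using the deformation retraction $H_{f}$ of $D_{f}^{2r_{n}+\kappa_{n}}$ onto $D_{f}$ and invokes the quantitative feature of that retraction, namely that flow lines stay inside balls of radius $2r/\mu^{2}$ (Lemma~\ref{deformation retract}, from Theorem 12 of \cite{kim2020homotopy}); this is exactly where the exponent $1/\mu^{2}$ in $\kappa_{n}$ originates. Your sandwich sidesteps that Lipschitz-type control entirely, so it is shorter and in fact establishes the result under the weaker calibration $\kappa_{n}\geq 2r_{n}$.

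Two imprecisions are worth flagging. First, the attribution: the statement ``every offset $D_{f}^{\eta}$ with $\eta\in(0,R_{\mu})$ has the homotopy type of $D_{f}$'' is not the Chazal--Cohen-Steiner--Lieutier stability theorem (that concerns bottleneck stability of persistence diagrams); the deformation retraction of small offsets onto a set of positive $\mu$-reach is what the paper takes from \cite{kim2020homotopy}. Second, your claim that ``the factor $2/\mu^{2}$ is precisely the scale at which the $\mu$-reach reconstruction theorem ensures...'' is misleading for your own argument: as above, the sandwich never requires $\kappa_{n}\geq 2r_{n}/\mu^{2}$, only $\kappa_{n}\geq 2r_{n}$ plus $2r_{n}+\kappa_{n}<R_{\mu}$. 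The $1/\mu^{2}$ constant is an artifact of the paper's chosen retraction-based argument, and your route shows it is not intrinsic to the conclusion.
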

\begin{figure}[H]
    \centering
    \begin{tikzpicture}[x=0.75pt,y=0.75pt,yscale=-1,xscale=1]
\draw  [draw opacity=0][fill={rgb, 255:red, 155; green, 155; blue, 155 }  ,fill opacity=1 ] (369.62,17.3) -- (393.72,17.3) -- (393.72,41.37) -- (369.62,41.37) -- cycle ;
\draw  [draw opacity=0][fill={rgb, 255:red, 155; green, 155; blue, 155 }  ,fill opacity=1 ] (369.62,41.7) -- (393.72,41.7) -- (393.72,65.77) -- (369.62,65.77) -- cycle ;
\draw  [draw opacity=0][fill={rgb, 255:red, 155; green, 155; blue, 155 }  ,fill opacity=1 ] (369.45,66.11) -- (393.55,66.11) -- (393.55,90.17) -- (369.45,90.17) -- cycle ;
\draw  [draw opacity=0][fill={rgb, 255:red, 155; green, 155; blue, 155 }  ,fill opacity=1 ] (345.35,42.04) -- (369.45,42.04) -- (369.45,66.11) -- (345.35,66.11) -- cycle ;
\draw  [draw opacity=0][fill={rgb, 255:red, 155; green, 155; blue, 155 }  ,fill opacity=1 ] (345.35,66.11) -- (369.45,66.11) -- (369.45,90.17) -- (345.35,90.17) -- cycle ;
\draw  [draw opacity=0][fill={rgb, 255:red, 155; green, 155; blue, 155 }  ,fill opacity=1 ] (345.35,90.17) -- (369.45,90.17) -- (369.45,114.24) -- (345.35,114.24) -- cycle ;
\draw  [draw opacity=0][fill={rgb, 255:red, 155; green, 155; blue, 155 }  ,fill opacity=1 ] (321.08,90.34) -- (345.18,90.34) -- (345.18,114.41) -- (321.08,114.41) -- cycle ;
\draw  [draw opacity=0][fill={rgb, 255:red, 155; green, 155; blue, 155 }  ,fill opacity=1 ] (175.47,114.41) -- (199.57,114.41) -- (199.57,138.48) -- (175.47,138.48) -- cycle ;
\draw  [draw opacity=0][fill={rgb, 255:red, 155; green, 155; blue, 155 }  ,fill opacity=1 ] (272.72,90.34) -- (296.82,90.34) -- (296.82,114.41) -- (272.72,114.41) -- cycle ;
\draw  [draw opacity=0][fill={rgb, 255:red, 155; green, 155; blue, 155 }  ,fill opacity=1 ] (321.08,114.41) -- (345.18,114.41) -- (345.18,138.48) -- (321.08,138.48) -- cycle ;
\draw  [draw opacity=0][fill={rgb, 255:red, 155; green, 155; blue, 155 }  ,fill opacity=1 ] (296.82,114.41) -- (320.92,114.41) -- (320.92,138.48) -- (296.82,138.48) -- cycle ;
\draw  [draw opacity=0][fill={rgb, 255:red, 155; green, 155; blue, 155 }  ,fill opacity=1 ] (296.82,138.48) -- (320.92,138.48) -- (320.92,162.54) -- (296.82,162.54) -- cycle ;
\draw  [draw opacity=0][fill={rgb, 255:red, 155; green, 155; blue, 155 }  ,fill opacity=1 ] (272.55,138.48) -- (296.65,138.48) -- (296.65,162.54) -- (272.55,162.54) -- cycle ;
\draw  [draw opacity=0][fill={rgb, 255:red, 155; green, 155; blue, 155 }  ,fill opacity=1 ] (248.28,114.07) -- (272.38,114.07) -- (272.38,138.14) -- (248.28,138.14) -- cycle ;
\draw  [draw opacity=0][fill={rgb, 255:red, 155; green, 155; blue, 155 }  ,fill opacity=1 ] (248.28,90.01) -- (272.38,90.01) -- (272.38,114.07) -- (248.28,114.07) -- cycle ;
\draw  [draw opacity=0][fill={rgb, 255:red, 155; green, 155; blue, 155 }  ,fill opacity=1 ] (248.28,41.54) -- (272.38,41.54) -- (272.38,65.6) -- (248.28,65.6) -- cycle ;
\draw  [draw opacity=0][fill={rgb, 255:red, 155; green, 155; blue, 155 }  ,fill opacity=1 ] (247.77,66.11) -- (271.87,66.11) -- (271.87,90.17) -- (247.77,90.17) -- cycle ;
\draw  [draw opacity=0][fill={rgb, 255:red, 155; green, 155; blue, 155 }  ,fill opacity=1 ] (223.67,42.04) -- (247.77,42.04) -- (247.77,66.11) -- (223.67,66.11) -- cycle ;
\draw  [draw opacity=0][fill={rgb, 255:red, 155; green, 155; blue, 155 }  ,fill opacity=1 ] (199.74,41.7) -- (223.84,41.7) -- (223.84,65.77) -- (199.74,65.77) -- cycle ;
\draw  [draw opacity=0][fill={rgb, 255:red, 155; green, 155; blue, 155 }  ,fill opacity=1 ] (199.57,66.11) -- (223.67,66.11) -- (223.67,90.17) -- (199.57,90.17) -- cycle ;
\draw  [draw opacity=0][fill={rgb, 255:red, 155; green, 155; blue, 155 }  ,fill opacity=1 ] (175.47,65.94) -- (199.57,65.94) -- (199.57,90.01) -- (175.47,90.01) -- cycle ;
\draw  [draw opacity=0][fill={rgb, 255:red, 155; green, 155; blue, 155 }  ,fill opacity=1 ] (151.37,90.17) -- (175.47,90.17) -- (175.47,114.24) -- (151.37,114.24) -- cycle ;
\draw  [draw opacity=0][fill={rgb, 255:red, 155; green, 155; blue, 155 }  ,fill opacity=1 ] (175.47,90.17) -- (199.57,90.17) -- (199.57,114.24) -- (175.47,114.24) -- cycle ;
\draw  [draw opacity=0][fill={rgb, 255:red, 155; green, 155; blue, 155 }  ,fill opacity=1 ] (151.04,114.41) -- (175.14,114.41) -- (175.14,138.48) -- (151.04,138.48) -- cycle ;
\draw  [draw opacity=0] (151.04,17.3) -- (394.4,17.3) -- (394.4,236.6) -- (151.04,236.6) -- cycle ; \draw   (151.04,17.3) -- (151.04,236.6)(175.3,17.3) -- (175.3,236.6)(199.57,17.3) -- (199.57,236.6)(223.84,17.3) -- (223.84,236.6)(248.11,17.3) -- (248.11,236.6)(272.38,17.3) -- (272.38,236.6)(296.65,17.3) -- (296.65,236.6)(320.92,17.3) -- (320.92,236.6)(345.18,17.3) -- (345.18,236.6)(369.45,17.3) -- (369.45,236.6)(393.72,17.3) -- (393.72,236.6) ; \draw   (151.04,17.3) -- (394.4,17.3)(151.04,41.57) -- (394.4,41.57)(151.04,65.84) -- (394.4,65.84)(151.04,90.11) -- (394.4,90.11)(151.04,114.37) -- (394.4,114.37)(151.04,138.64) -- (394.4,138.64)(151.04,162.91) -- (394.4,162.91)(151.04,187.18) -- (394.4,187.18)(151.04,211.45) -- (394.4,211.45)(151.04,235.72) -- (394.4,235.72) ; \draw    ;
\draw [color={rgb, 255:red, 74; green, 144; blue, 226 }  ,draw opacity=1 ][line width=1.5]    (150.4,125.48) .. controls (198.94,92.64) and (249.75,-15) .. (268.03,117.54) .. controls (286.31,250.09) and (373.85,37.14) .. (393.72,17.3) ;
\draw  [color={rgb, 255:red, 208; green, 2; blue, 27 }  ,draw opacity=1 ][line width=1.5]  (272.72,114.41) -- (296.88,114.41) -- (296.88,138.54) -- (272.72,138.54) -- cycle ;
\end{tikzpicture}
    \caption{In blue is represented a true jump sets $D_{f}$ and its estimation $\hat{D}_{f}$ in gray, the false cycle highlighted in red is problematic but can be removed using image homology groups.}
    \label{fig: problem cycle}
\end{figure}
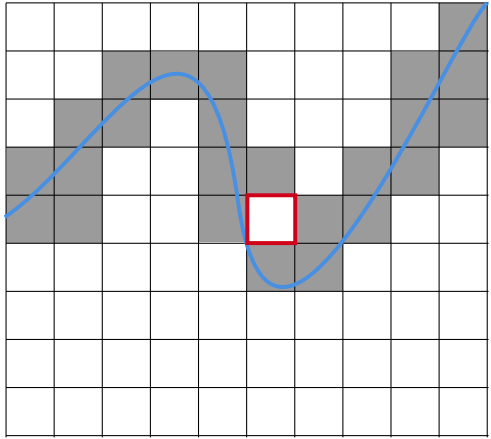
 The cardinal of $H_{s}(D_{f})$, denoted $\beta_{s}(D_{f})$, is the $s-$th Betti number of $D_{f}$, a practical topological descriptor. It simply represents the number of $s-$dimensional holes of $D_{f}$. For instance, in the example illustrated by Figure \ref{Fig: example 1}, the jumps set $D_{f}$ displays two $0-$dimensional holes (i.e. connected components) and two $1-$dimensional holes (i.e. cycles), thus, $\beta_{0}(D_{f})=\beta_{1}(D_{f})=2$ and  $\beta_{s}(D_{f})=0$ otherwise (as $D_{f}\subset[0,1]^{2}$ its homology groups for $s>1$ are trivial).\\
Now, considering, $\widehat{\beta_{s}(D_{f})}$ the cardinal of $\widehat{H_{s}(D_{f})}$, the following corollary is an immediate consequence of Proposition \ref{prp: PH1}.
\begin{crl}
\label{crl: PH1}
Choosing $h_{n}$, $r_{n}$ and $\kappa_{n}$ as in Proposition \ref{prp: PH1}, we have, for sufficiently large $n$, for all $f\in \mathcal{F}_{d}(\omega,\mu,R_{\mu},l)$ and $s\in\mathbb{N}$,
$$\mathbb{P}\left(\beta_{s}(D_{f})=\widehat{\beta_{s}(D_{f})}\right)\geq 1-2r_{n}.$$
\end{crl}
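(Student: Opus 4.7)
The statement is presented as an immediate consequence of Proposition \ref{prp: PH1}, and indeed the argument reduces to a one-line monotonicity of probabilities, so my plan is essentially to package that observation cleanly.

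First, I would recall that the Betti number $\beta_s(\cdot)$ is, by definition, the cardinal (dimension over the coefficient field $\mathbb{K}$) of the homology group $H_s(\cdot)$. Consequently, $\beta_s$ is an invariant of the isomorphism class of $H_s$: whenever two homology groups agree (as submodules, or even up to isomorphism), their cardinals agree. Applied to $\widehat{\beta_s(D_f)}$, which is defined as the cardinal of $\widehat{H_s(D_f)} = \operatorname{Im}(\rho_s)$, this means that the event
\[
\bigl\{H_s(D_f) = \widehat{H_s(D_f)}\bigr\} \subseteq \bigl\{\beta_s(D_f) = \widehat{\beta_s(D_f)}\bigr\}.
\]

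Second, I would invoke Proposition \ref{prp: PH1} with the prescribed choices of $h_n$, $r_n$, and $\kappa_n$, which for sufficiently large $n$ gives
\[
\mathbb{P}\bigl(H_s(D_f) = \widehat{H_s(D_f)}\bigr) \geq 1 - 2r_n
\]
for every $f \in \mathcal{F}_d(\omega,\mu,R_\mu,l)$ and every $s \in \mathbb{N}$. Combining this with the set inclusion above yields the desired bound. No geometric or probabilistic work beyond what is already absorbed into Proposition \ref{prp: PH1} is required.

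The main (and essentially only) conceptual point to flag is that the event compared is that of equality of homology groups as subgroups, not merely isomorphism; but since $\widehat{H_s(D_f)}$ is constructed as the image of a map into $H_s(\hat{D}_f^\kappa)$ and Proposition \ref{prp: PH1} already produces equality in that ambient group, there is no extra step. Hence the corollary follows directly, and I do not anticipate any real obstacle.
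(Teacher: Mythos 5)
Your proof is correct and matches the paper's (implicit) argument: the paper states the corollary as an immediate consequence of Proposition \ref{prp: PH1} precisely because equality of the homology groups forces equality of their cardinals, so the event in the proposition is contained in the event of the corollary and monotonicity of probability finishes the job.
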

 The computation of $\widehat{\beta_{s}(D_{f})}$ can be achieved using persistent homology and is discussed in the following section.
\section{Persistent homology inference}
\label{sec: PH}
Persistent homology is a prominent concept from Topological Data Analysis that encodes the evolution of topology through a nested family $\mathcal{K}=\left(\mathcal{K}_{\lambda}\right)_{\lambda\in\Lambda}$ (called a filtration), $\Lambda\subset \mathbb{R}$ : birth and death of connected components, cycles, voids,... More precisely, applying the functor $H_{s}$ to $\left(\mathcal{K}_{\lambda}\right)_{\lambda\in\Lambda}$ forms a persistence module $\mathbb{V}_{s}$, which algebraic structure captures the evolution of homology groups of $\mathcal{K}$. Under weak assumptions, the algebraic structure of persistence modules can be characterized by discrete invariants, represented by persistence diagrams. Persistence diagrams (see Figure \ref{fig:diagram}) are collections of points $(b_{i},d_{i})\in \mathbb{R}^{2}$ with $b_{i}$ corresponding to the birth time of a topological feature (e.g. a new connected component appearing at $\lambda=b_{i}$), $d_{i}$ to its death (e.g. the connected component merging with another component born before $b_{i}$ at $\lambda=d_{i}$) and $d_{i}-b_{i}$ to its lifetime. For an overview of persistent homology, with visual illustrations and example of applications, we recommend \cite{chazal2021introduction}. See \cite{chazal2013} for more detailed and rigorous constructions.\\\\
We here propose a procedure to infer consistently the persistence diagram from the nested family $(D_{f}^{\beta})_{\beta\geq 0}$ in bottleneck distance. It provides complementary information to the homology inferred in the previous section, for example, for $s\geq 1$, the lifetime of a class $s-$cycles gives insight on the size of the associated hole. Let $\mathbb{V}_{f,s}$ the persistent module for the $s-$th homology associated to $(D_{f}^{\beta})_{\beta\geq 0}$, and $\operatorname{dgm}(\mathbb{V}_{f,s})$ its persistence diagram. Let also, $\hat{\mathbb{V}}_{f,s}$ the module associated to  $(\hat{D}_{f}^{\beta})_{\beta\geq 0}$ and $\operatorname{dgm}(\hat{\mathbb{V}}_{f,s})$ its persistence diagram.  We prove in Appendix \ref{sec: q-tame} that both these diagrams are well-defined. The following results establish the consistency of our procedure.

Let $D_{1}$ and $D_{2}$ two persistence diagrams and $\Delta=\{(x,x)\in\mathbb{R}^{2}|x>0\}$, the bottleneck distance between $D_{1}$ and $D_{2}$ is then defined by :
$$d_{b}(D_{1},D_{2})=\underset{\gamma\in\Gamma}{\inf}\underset{p\in D_{1}\cup \Delta}{\sup}||p-\gamma(p)||_{\infty}$$
with $\Gamma$ the set of all bijections between $D_{1}\cup \Delta$ and $D_{2}\cup \Delta$. This can be though as a minimal matching distance between the set $D_{1}\cup \Delta$ and $D_{2}\cup \Delta$.
\begin{figure}[H]\centering
    \begin{subfigure}{0.3\textwidth}
    \centering
        \includegraphics[scale=0.5]{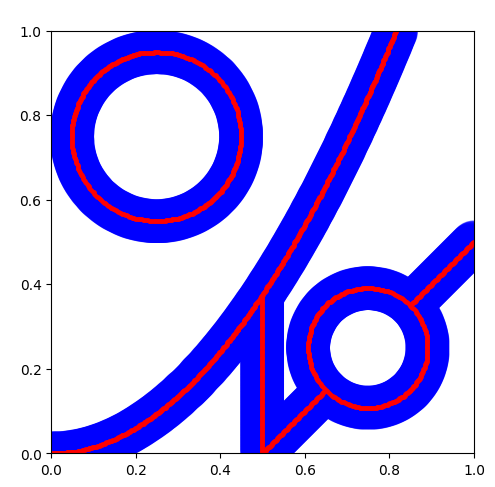}
        \caption{$\beta=0.05$}
    \end{subfigure}
\hfil
    \begin{subfigure}{0.3\textwidth}
    \centering
    \includegraphics[scale=0.5]{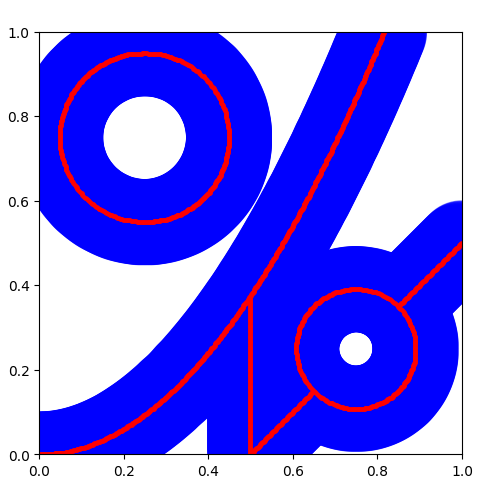}
   \caption{$\beta=0.1$}
\end{subfigure}

    \begin{subfigure}{0.3\textwidth}
    \centering
        \includegraphics[scale=0.5]{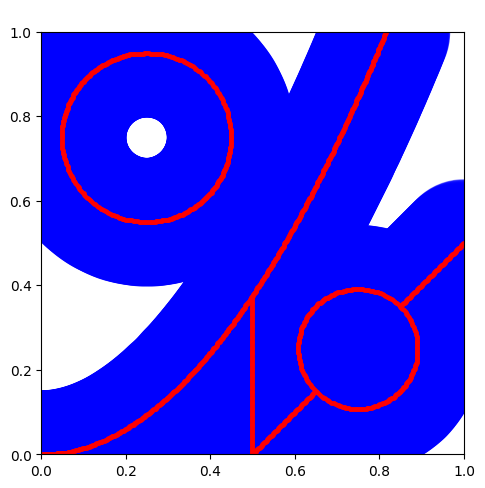}
       \caption{$\beta=0.15$}
    \end{subfigure}
\hfil
    \begin{subfigure}{0.3\textwidth}
    \centering
    \includegraphics[scale=0.5]{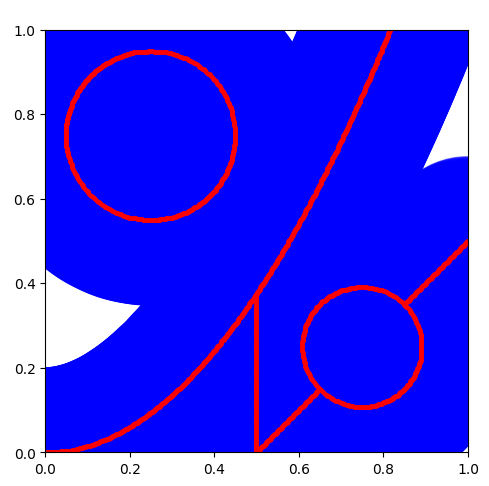}
    \caption{$\beta=0.2$}
\end{subfigure}
    \caption{In red is represented the true jumps set $D_{f}$ and in blue are represented elements of the offset filtration $(D^{\beta}_{f})_{\beta\geq 0}$, for various value of $\beta$, following the example of Figure \ref{Fig: example 1}.}
\end{figure}
\begin{prp}
\label{prp: PH2}
Choosing $h_{n}$ and $r_{n}$ as in Theorem \ref{thm: borne sup}, we have, for sufficiently large $n$, for all $s\in\mathbb{N}$
$$\mathbb{P}\left(\sup\limits_{f\in\mathcal{F}_{d}(\omega,\mu,R_{\mu},l)} d_{b}\left(\operatorname{dgm}(\mathbb{V}_{f,s}),\operatorname{dgm}(\hat{\mathbb{V}}_{f,s})\right)\leq 2r_{n}\right)\geq 1-2r_{n}.$$
\end{prp}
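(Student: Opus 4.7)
The plan is to reduce the problem to Theorem \ref{thm: borne sup} via the standard stability theorem for persistence diagrams of offset filtrations. The key observation is that bottleneck distance between persistence diagrams of the offset filtrations of two compacts is controlled by their Hausdorff distance, so a uniform Hausdorff bound automatically yields a uniform bottleneck bound.

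First, I would invoke Theorem \ref{thm: borne sup} to obtain the event
$$E = \left\{\sup_{f\in\mathcal{F}_{d}(\omega,\mu,R_{\mu},l)} d_H\bigl(\hat{D}_f, D_f\bigr) \leq 2r_n\right\},$$
which has probability at least $1-2r_n$. On $E$, for every $f$ and every $\beta \geq 0$, the inclusion characterisation of Hausdorff distance combined with the trivial fact $(A^r)^s \subset A^{r+s}$ yields the four inclusions
$$D_f^\beta \subset \hat{D}_f^{\beta+2r_n} \subset D_f^{\beta+4r_n}, \qquad \hat{D}_f^\beta \subset D_f^{\beta+2r_n} \subset \hat{D}_f^{\beta+4r_n},$$
which are moreover functorial in $\beta$, in the sense that all composition squares commute.

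Second, applying the homology functor $H_s$ to these two commuting ladders of inclusions produces a $2r_n$-interleaving between the persistence modules $\mathbb{V}_{f,s}$ and $\hat{\mathbb{V}}_{f,s}$. Granted q-tameness of both modules, the isometry theorem of Chazal--de Silva--Glisse--Oudot then gives
$$d_b\bigl(\operatorname{dgm}(\mathbb{V}_{f,s}),\operatorname{dgm}(\hat{\mathbb{V}}_{f,s})\bigr) \leq 2r_n,$$
uniformly in $f$. Taking the supremum over $\mathcal{F}_d(\omega,\mu,R_\mu,l)$ on the event $E$ yields the claim.

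The main obstacle, which is the only non-routine step, is ensuring q-tameness of both filtrations so that the persistence diagrams are well-defined and the isometry theorem applies. For $\hat{\mathbb{V}}_{f,s}$ this should follow easily, as $\hat{D}_f$ is a finite union of hypercubes in $C_h$, making its offset filtration piecewise constant in $\beta$ with finite-dimensional homology. For $\mathbb{V}_{f,s}$, q-tameness should be extracted from the $\mu$-reach assumption \textbf{A3}: positive $\mu$-reach gives enough Lipschitz-type control of the distance function to $\bigcup_i \partial M_i$ (hence of $D_f$) to rule out infinite-dimensional persistent classes. This is precisely what is established in the dedicated appendix referenced as \ref{sec: q-tame}, which I would cite to close the argument.
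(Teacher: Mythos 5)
Your proof is correct and follows essentially the same route as the paper: invoke Theorem~\ref{thm: borne sup} to get a high-probability Hausdorff/inclusion bound on $\hat{D}_f$, translate this into a $2r_n$-interleaving of the offset persistence modules, and conclude via the stability theorem of \cite{Chazal2009} once $q$-tameness is secured. The only cosmetic difference is that you build the interleaving from the symmetric Hausdorff bound, whereas the paper uses the slightly stronger inclusion chain $D_f \subset \hat{D}_f \subset D_f^{2r_n}$ established in the proof of Theorem~\ref{thm: borne sup}; both give a $2r_n$-interleaving. One small mischaracterization: the appendix's $q$-tameness argument for $\mathbb{V}_{f,s}$ does not use the $\mu$-reach assumption \textbf{A3} at all --- it relies only on compactness of $D_f^\beta$ and triangulability of $[0,1]^d$ to factor each structure map through the homology of a finite simplicial complex --- but since you defer to that appendix, the gap is not in your proof.
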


 From this bound, we can deduce a bound in expectation given by the following corollary.
\begin{crl}
\label{coro: PH2}
Choosing $h_{n}$ and $r_{n}$ as in Theorem \ref{thm: borne sup}, we have, for sufficiently large $n$, for all $s\in\mathbb{N}$
$$\mathbb{E}\left(\sup\limits_{f\in\mathcal{F}_{d}(\omega,\mu,R_{\mu},l)} d_{b}\left(\operatorname{dgm}(\mathbb{V}_{f,s}),\operatorname{dgm}(\hat{\mathbb{V}}_{f,s})\right)\right)\lesssim r_{n}.$$
\end{crl}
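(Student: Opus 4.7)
The plan is to convert the high-probability bound of Proposition \ref{prp: PH2} into a bound in expectation via the classical splitting trick, the substantive content being an almost-sure deterministic bound on the bottleneck distance. Fix $s\in\mathbb{N}$ and set
\[
Y_{n}=\sup_{f\in\mathcal{F}_{d}(\omega,\mu,R_{\mu},l)} d_{b}\!\left(\operatorname{dgm}(\mathbb{V}_{f,s}),\operatorname{dgm}(\hat{\mathbb{V}}_{f,s})\right),
\]
so that Proposition \ref{prp: PH2} reads $\mathbb{P}(Y_{n}>2r_{n})\leq 2r_{n}$ for $n$ sufficiently large. Splitting the expectation according to whether $Y_{n}\leq 2r_{n}$ or not, one gets
\[
\mathbb{E}[Y_{n}]\leq 2r_{n}\cdot\mathbb{P}(Y_{n}\leq 2r_{n})+M\cdot\mathbb{P}(Y_{n}>2r_{n})\leq 2r_{n}+2Mr_{n},
\]
provided one can exhibit a constant $M$ (independent of $n$) such that $Y_{n}\leq M$ almost surely. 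The whole problem reduces to producing such an $M$.

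To obtain it, I would use the fact that both $D_{f}$ and $\hat{D}_{f}$ are compact subsets of $[0,1]^{d}$, whose Euclidean diameter is $\sqrt{d}$. Hence, whenever these sets are non-empty, their offsets satisfy $D_{f}^{\beta}=\hat{D}_{f}^{\beta}=[0,1]^{d}$ as soon as $\beta\geq \sqrt{d}$, so both persistence modules $\mathbb{V}_{f,s}$ and $\hat{\mathbb{V}}_{f,s}$ are constant past $\beta=\sqrt{d}$. Consequently every off-diagonal point of either persistence diagram lies in $[0,\sqrt{d}]^{2}$ and has persistence at most $\sqrt{d}$. The essential part causes no trouble: for $s\geq 1$ there are no essential classes, since $H_{s}([0,1]^{d})=0$; for $s=0$ both diagrams carry a single essential class, born at $\beta=0$, which match at zero cost. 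Matching every remaining off-diagonal point to the diagonal yields $Y_{n}\leq \sqrt{d}/2$, giving the desired $M=\sqrt{d}/2$.

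Combining the two steps,
\[
\mathbb{E}[Y_{n}]\leq 2r_{n}+\sqrt{d}\,r_{n}=(2+\sqrt{d})\,r_{n}\lesssim r_{n},
\]
which is the stated inequality. The only mildly subtle point, and the one I would verify most carefully, is the deterministic bound on $Y_{n}$: one must confirm that the offset filtrations truly stabilize at $\beta=\sqrt{d}$ and handle the essential-class bookkeeping in degree zero (in particular ruling out the degenerate configuration where one of $D_{f},\hat{D}_{f}$ is empty while the other is not, which if needed can be incorporated into the estimation of $\mathbb{P}(Y_{n}>2r_{n})$ without changing the final rate). Everything else is a routine expectation--probability conversion.
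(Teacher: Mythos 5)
Your proposal is correct and takes essentially the same route as the paper's own proof of Corollary~\ref{coro: PH2}: split the expectation on the event of Proposition~\ref{prp: PH2} and bound the bottleneck distance on the complementary (small-probability) event by a dimension-dependent constant. The paper simply uses $\sqrt{d}$ as the almost-sure bound without comment, whereas you supply the justification (offset filtrations stabilize at $\beta=\sqrt{d}$, so all finite features live in $[0,\sqrt{d}]^{2}$, giving the slightly sharper $\sqrt{d}/2$) and correctly flag the degenerate one-empty/one-nonempty case, which is the one point one would want spelled out; otherwise the two arguments coincide.
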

\textbf{Link with homology inference.} The persistent diagram from the offset filtration $(\hat{D}_{f}^{\beta})_{\beta\geq 0}$ also permits to compute the estimated Betti numbers $\widehat{\beta_{s}(D_{f})}$ introduced in the previous section. It simply suffices to count the cycles classes that has survived between $0$ and $\kappa_{n}$, which corresponds to the number of point (for the $H_{s}-$homology) of $\operatorname{dgm}(\hat{V}_{f,s})$ lying in $\{0\}\times [\kappa_{n},+\infty[$.
\begin{figure}[h]
\centering
\begin{subfigure}{.5\textwidth}
  \centering
  \includegraphics[width=.8\linewidth]{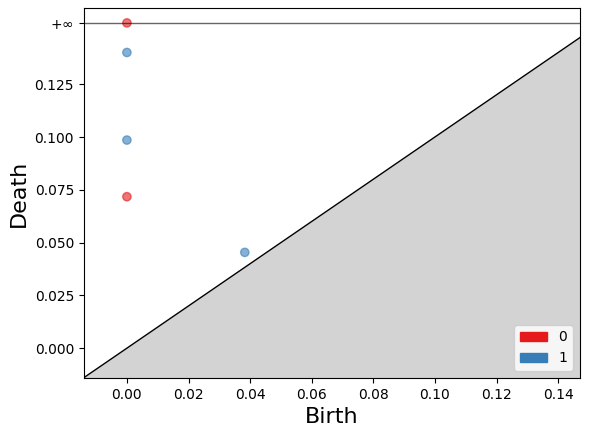}
  \caption{True persistence diagram}
\end{subfigure}%
\begin{subfigure}{.5\textwidth}
  \centering
  \includegraphics[width=.8\linewidth]{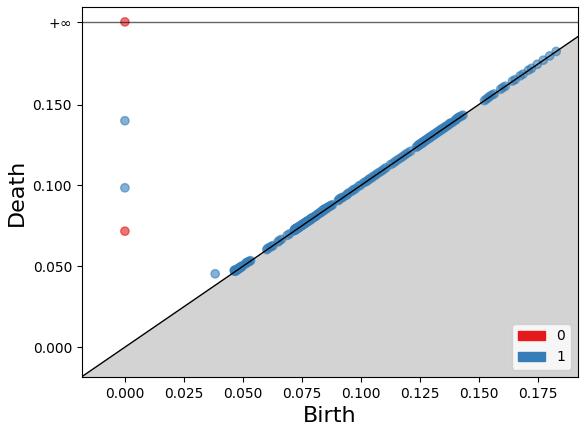}
  \caption{Estimated persistence diagram}
\end{subfigure}
\caption{Persistence diagram for the filtrations $(D_{f}^{\beta})_{\beta\geq 0}$ and $(\hat{D}_{f}^{\beta})_{\beta\geq 0}$, following the example of Figure \ref{Fig: example 1}. In red are represented the $H_{0}-$persistence diagrams and in blue the $H_{1}-$persistence diagrams. Up to noisy features close to the diagonal, topological features from the $(D_{f}^{\beta})_{\beta\geq 0}$ are well approximated.}
\label{fig:diagram}
\end{figure}
\section{Conclusion}
In this paper, we propose a new framework for the inference of the jumps set. It is based on geometric control of the discontinuities and allows corners and multiple points, scenarios often excluded in earlier works. Although the simple histogram differences estimator we introduce is not intended to outperform more sophisticated existing methods in practice, we demonstrate its consistency and near-optimality under the weak assumptions we introduced. This result extends the theoretical scope of jumps inference and, more generally, highlights the advantages of using geometric tools to characterize discontinuities over traditional analytic assumptions.

Additionally, this approach permits the inference of further geometric and topological information about the jumps set, typically, as shown in this work, its homology groups (or Betti numbers) and the persistence diagram from its offsets with convergence guarantees.
\newpage
\bibliographystyle{plainnat}
\bibliography{bibliographie}
\appendix
\section{Proof of Theorem \ref{thm: borne sup} and Corollary \ref{coro: upper bound}}
\label{proof th1}
This section is devoted to the proof of Theorem \ref{thm: borne sup} and Corollary \ref{coro: upper bound}. We start by establishing the following first key lemma.
\begin{lmm}
\label{lmm:th1}
Let $f\in\mathcal{F}_{d}(\omega,\mu,R_{\mu},l)$ and $\overline{C}_{h_{n}}$ the set of hypercubes $H\in C_{h_{n}}$ such that $H\cap \bigcup\limits_{i=1}^{l}\partial M_{i}=\emptyset$. We have,
$$\mathbb{P}\left(\left\|(\hat{f}-f)|_{\overline{C}_{h_{n}}}\right\|_{\infty}> t+\omega\left(\sqrt{d}h_{n}\right)\right)\leq \frac{2}{h_{n}^{d}}\exp(-t^{2}\lfloor h_{n}N\rfloor^{d}/(2\sigma^2)).$$
\end{lmm}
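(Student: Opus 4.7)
The plan is to isolate bias and variance on each hypercube that avoids the jump boundaries, then take a union bound.

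First, I would verify that if $H\in\overline{C}_{h_n}$, then $H$ is contained in a single continuity piece $M_{i(H)}$. Since $H\cap\bigcup_i\partial M_i=\emptyset$ and $\bigcup_i\overline{M_i}=[0,1]^d$, every point of $H$ lies in some open $M_i$; by connectedness of the hypercube, one such index works for all of $H$. Consequently, by \textbf{A1}, $f$ is uniformly continuous on $H$ with modulus $\omega$, and in particular $|f(x)-f(y)|\leq \omega(\sqrt{d}\,h_n)$ for all $x,y\in H$ (since $\operatorname{diam}(H)=\sqrt{d}\,h_n$).

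Next, for $x\in H$ I would decompose
\[
\hat f(x)-f(x)=\frac{1}{n_H}\sum_{x_i\in H}(f(x_i)-f(x))+\frac{\sigma}{n_H}\sum_{x_i\in H}\varepsilon_i,
\]
where $n_H=|\{x_i\in H\}|$. The first (bias) term is bounded deterministically by $\omega(\sqrt{d}\,h_n)$ by the previous step, uniformly in $x\in H$. The second (stochastic) term does not depend on $x$ and is a centered Gaussian with variance $\sigma^2/n_H$. Since $H$ is a hypercube of side $h_n$ inside the $N^d$-grid, a standard counting argument yields $n_H\geq \lfloor h_n N\rfloor^d$.

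I would then apply the standard Gaussian tail bound $\mathbb{P}(|Z|>t)\le 2\exp(-t^2/(2\operatorname{Var}Z))$ to get
\[
\mathbb{P}\!\left(\Big|\tfrac{\sigma}{n_H}\textstyle\sum_{x_i\in H}\varepsilon_i\Big|>t\right)\leq 2\exp\!\left(-\tfrac{t^2\lfloor h_n N\rfloor^d}{2\sigma^2}\right).
\]
Finally, a union bound over $H\in\overline{C}_{h_n}$, whose cardinality is at most $|C_{h_n}|\leq 1/h_n^d$, combined with the implication ``$\|(\hat f-f)|_{\overline C_{h_n}}\|_\infty>t+\omega(\sqrt d\,h_n)$'' $\Rightarrow$ ``some $H$ has its stochastic term exceeding $t$'', yields the stated inequality.

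The only mild subtlety is step one (guaranteeing $H\subset M_{i(H)}$ so that the modulus $\omega$ applies on the whole of $H$); everything else is a routine bias--variance split plus a Gaussian union bound, so no serious obstacle is anticipated.
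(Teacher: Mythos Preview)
Your proposal is correct and follows essentially the same route as the paper: a deterministic bias bound via the modulus $\omega$ on each jump-free hypercube, a Gaussian tail bound on the averaged noise, and a union bound over the at most $1/h_n^d$ hypercubes. If anything, you are slightly more careful than the paper in justifying why $H\subset M_{i(H)}$ (the clopen-in-$H$ / connectedness argument), which the paper simply invokes \textbf{A1} for without comment.
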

\begin{proof}
Let $H\in \overline{C}_{h_{n}}$ and $x\in H$, By assumption \textbf{A1}, we have,
\begin{align*}
|f(x)-\hat{f}(x)|&=\left|f(x)-\frac{1}{|\{x_{i}\in H\}|}\sum\limits_{x_i\in H}X_{i}\right|\\
&=\left|f(x)-\frac{1}{|\{x_{i}\in H\}|}\sum\limits_{x_i\in H}f(x_{i})-\frac{\sigma}{|\{x_{i}\in H\}|}\sum\limits_{x_i\in H}\varepsilon_{i}\right|\\
&\leq \omega\left(\sqrt{d}h_{n}\right)+\left|\frac{\sigma}{|\{x_{i}\in H\}|}\sum\limits_{x_i\in H}\varepsilon_{i}\right|
\end{align*}
Now, as $\frac{1}{\sqrt{|\{x_{i}\in H\}|}}\sum\limits_{x_i\in H}\varepsilon_{i}$ is a standard Gaussian variable,
\begin{align*}
\mathbb{P}\left(\left|f(x)-\hat{f}(x)\right|> t+\omega\left(\sqrt{d}h_{n}\right)\right)&\leq \mathbb{P}\left(\frac{1}{|\{x_{i}\in H\}|}\sum\limits_{x_i\in H}|\varepsilon_{i}|> t/\sigma\right)\\
&\leq 2\exp\left(-\frac{t^{2}|\{x_{i}\in H\}|^{d}}{2\sigma^{2}}\right)\\
&\leq 2\exp\left(-\frac{t^{2}\lfloor h_{n}N\rfloor^{d}}{2\sigma^{2}}\right).
\end{align*}
Then, by union bound,
$$\mathbb{P}\left(\max\limits_{H\in\overline{C}_{h_{n}}}\left\|\hat{f}_{n}|_{H}-f|_{H}\right\|_{\infty}\geq t+\omega\left(\sqrt{d}h_{n}\right)\right)\leq \frac{2}{h_{n}^{d}}\exp\left(-t^{2}\lfloor h_{n}N\rfloor^{d}/(2\sigma^2)\right).$$
\end{proof}
 Now, we recall a result from \cite{SSDO07}, the second key to establish Theorem \ref{thm: borne sup}.
\begin{lmm}{\cite[Lemma 3.1.]{SSDO07}}
\label{lmm SSDO7}
Let $K\subset [0,1]^{d}$ a compact set and let $\mu>0, r>0$ be such that $r< \operatorname{reach}_\mu(K)$. For any $x \in K^{r} \backslash K$, we have,
$$
d_{2}\left(x, \partial K^{r}\right) \leq \frac{r-d_K(x)}{\mu} \leq \frac{r}{\mu}.
$$
\end{lmm}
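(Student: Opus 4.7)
The plan is to exploit the gradient flow of the distance function $d_K$ together with the $\mu$-reach hypothesis. First, I would observe that $r < \operatorname{reach}_\mu(K)$ forces the pointwise lower bound $\|\nabla_K(y)\|_2 \geq \mu$ for every $y$ with $d_K(y) \in (0, r]$; otherwise there would exist $s \leq r$ with $\inf_{d_K^{-1}(s)} \|\nabla_K\|_2 < \mu$, contradicting the definition of $\operatorname{reach}_\mu(K)$. In particular, the open annulus $K^r \setminus K$ contains no $\mu$-critical point of $d_K$.

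Second, I would invoke the continuous flow of $\nabla_K$ available from the geometric inference literature (Lieutier's flow, or the Petrunin flow exploiting the semi-concavity of $\tfrac12\|\cdot\|_2^2 - d_K$). This provides, for each $x \in K^r \setminus K$, a $1$-Lipschitz integral curve $\gamma_x : [0, T_x] \to \mathbb{R}^d$ with $\gamma_x(0) = x$ such that, as long as $\gamma_x(t) \in K^r \setminus K$,
$$\frac{d}{dt} d_K(\gamma_x(t)) \geq \|\nabla_K(\gamma_x(t))\|_2 \geq \mu,$$
which, upon integration, yields $d_K(\gamma_x(t)) \geq d_K(x) + \mu t$.

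Third, setting $t^{*} = (r - d_K(x))/\mu$, the curve $\gamma_x$ must exit the annulus $K^r \setminus K$ at some time $t_0 \leq t^{*}$, necessarily by hitting the outer level set $\partial K^r = \{d_K = r\}$ (it cannot re-enter $K$ since $d_K$ is non-decreasing along $\gamma_x$). The $1$-Lipschitz property of $\gamma_x$ in $t$ then gives
$$d_2(x, \partial K^r) \leq \|x - \gamma_x(t_0)\|_2 \leq t_0 \leq t^{*} = \frac{r - d_K(x)}{\mu},$$
and $d_K(x) \geq 0$ immediately yields the secondary bound $\leq r/\mu$.

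The main obstacle is the construction of the flow itself: $d_K$ is only Lipschitz and $\nabla_K$ is generically discontinuous, so classical ODE theory does not apply. I would therefore rely on the Lieutier/Petrunin framework of gradient flows for semi-concave functions, which packages these technicalities and yields the speed estimate used above as a direct consequence; once this machinery is cited, the argument reduces to the three clean steps outlined.
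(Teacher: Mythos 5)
The paper does not prove this lemma; it is cited directly from \cite{SSDO07} (their Lemma~3.1) and used as a black box. Your proof correctly reproduces the gradient-flow argument underlying that reference: the $\mu$-reach hypothesis forces $\|\nabla_K\|_{2}\geq\mu$ throughout the annulus $\{0<d_K\leq r\}$, and integrating along the Lieutier flow of $d_K$ from $x$ until the trajectory reaches the level set $\{d_K=r\}$ gives the bound, since the flow line has length bounded by its travel time while $d_K$ increases at a rate controlled by $\mu$. One small technical caution: under the natural time parametrization, the Lieutier flow $\gamma$ satisfies $\|\gamma'(t)\|_{2}=\|\nabla_K(\gamma(t))\|_{2}\leq 1$ and $\frac{d}{dt}\,d_K(\gamma(t))=\|\nabla_K(\gamma(t))\|_{2}^{2}$, not $\|\nabla_K\|_{2}$, so in that parametrization the clean way to conclude is
$$r - d_K(x) \;=\; \int_0^{t_0}\|\nabla_K(\gamma(s))\|_{2}^{2}\,ds \;\geq\; \mu\int_0^{t_0}\|\nabla_K(\gamma(s))\|_{2}\,ds \;\geq\; \mu\,\|\gamma(t_0)-x\|_{2}\;\geq\;\mu\,d_{2}\left(x,\partial K^{r}\right),$$
whereas the pointwise inequality you write ($1$-Lipschitz curve \emph{and} $\frac{d}{dt}d_K\geq\|\nabla_K\|_{2}$) holds after reparametrizing $\gamma$ by arc length, which is legitimate here since $\|\nabla_K\|_{2}\geq\mu>0$ on the annulus. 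Either route closes the argument, and the secondary bound $\leq r/\mu$ is immediate from $d_K(x)\geq 0$.
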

 Equipped with this two lemmas, we can now show Theorem \ref{thm: borne sup}.
\begin{proof}[Proof of Theorem \ref{thm: borne sup}]
Let $f\in\mathcal{F}_{d}(\omega,\mu,R_{\mu},l)$ and suppose $n$ sufficiently large such that $r_{n}\geq (1+\sqrt{d})h_{n}/\mu$. Suppose furthermore that,
\begin{equation}
\label{th1:1}
l>8\left\|(\hat{f}-f)|_{\overline{C}_{h_{n}}}\right\|_{\infty}.
\end{equation}
Let $x\in D_{f}$ and $H\in C_{h_{n}}$ the hypercube containing $x$, by Assumption \textbf{A1} and \textbf{A2}, there exist $i,j\in\{1,...,l\}$, $i\ne j$, such that $x\in\partial M_{i}\cap \partial M_{j}$ and for all $z\in M_{i}\cap H^{r_{n}}$,
\begin{equation}
\label{th1:2}
f(z)\geq \liminf\limits_{y\rightarrow x}f(y)+l-\omega(3r_{n})
\end{equation}
and for all $z\in M_{j}\cap H^{r_{n}}$,
\begin{equation}
\label{th1:3}
f(z)\leq \liminf\limits_{y\rightarrow x}f(y)+\omega(3r_{n}).  
\end{equation}
Suppose moreover that $n$ is sufficiently large for $l>16\omega(3r_{n})$ and $3r_{n}<R_{\mu}$. By assumption \textbf{A3}, $\operatorname{reach}_{\mu}\left(M_{i}^{c}\right)\geq R_{\mu}$. Thus, by Lemma \ref{lmm SSDO7}, there exists,
$$y\in \left(\left(M_{i}^{c}\right)^{\left(1+\sqrt{d}\right)h_{n}}\right)^{c} \text{ such that } ||x-y||_{2}\leq r_{n}.$$ 
Denote $\overline{H}\in C_{h_{n}}$ the hypercube containing $y$, we then have $\overline{H}\subset M_{i}\cap H^{r_{n}}$. Similarly, there exists $\underline{H}\in C_{h_{n}}$ such that $\underline{H}\subset M_{j}\cap H^{r_{n}}$. We then have, combining (\ref{th1:2}) and (\ref{th1:3}), under the event (\ref{th1:1}),
\begin{align*}
\hat{f}|_{\overline{H}}-\hat{f}|_{\underline{H}}&\geq \min\limits_{z\in\overline{H}}f(z)-\max\limits_{z\in\underline{H}}f(z)-\|\hat{f}|_{\overline{H}}-f|_{\overline{H}}\|_{\infty}-\|\hat{f}|_{\underline{H}}-f|_{\underline{H}}\|_{\infty}\\
&\geq l-2\omega(3r_{n})-2\left\|(\hat{f}-f)|_{\overline{C}_{h_{n}}}\right\|_{\infty}\\
&> l/2
\end{align*}
Hence, under the event (\ref{th1:1}), $D_{f}\subset \hat{D}_{f}$.\\
Let $x\in \left(D_{f}^{2r_{n}}\right)^{c}\cap M_{i}$, then all hypercubes of $C_{h_{n}}$ intersecting $B_{2}(x,r_{n})$ are contained in $D_{f}^{c}\cap M_{i}$, thus, if (\ref{th1:1}) is verified and, as $l>16\omega(3r_{n})$,
\begin{align*}
\limsup\limits_{y\in H(x)^{r_{n}}\rightarrow x}\hat{f}_{n}(y)-\liminf\limits_{y\in H(x)^{r_{n}}\rightarrow x}\hat{f}_{n}(y)&\leq 2\left\|(\hat{f}-f)|_{\overline{C}_{h_{n}}}\right\|_{\infty}+2\omega(3r_{n})< l/2.
\end{align*}
Thus, under the event (\ref{th1:1}), we also have, $\hat{D}_{f}\subset D_{f}^{2r_{n}}$.\\\\
Remark that by choice of $h_{n}$, for sufficiently large $n$, we have $\lfloor h_{n}N\rfloor^{d}>h_{n}^{d}n/2$. Supposing also $n$ sufficently large for $s_{n}>(1024\sigma^{2}/l^{2})^{1/d}$, by Lemma \ref{lmm:th1}, we then have,
\begin{align*}
\mathbb{P}\left(D_{f}\subset \hat{D}_{f}\subset D_{f}^{2r_{n}}\right)&\geq \mathbb{P}\left(\left\|(\hat{f}-f)|_{\overline{C}_{h_{n}}}\right\|_{\infty}\leq l/8\right)\\
&\geq 1-\frac{2}{h_{n}^{d}}\exp\left(-\frac{\left(l/8-\omega\left(\sqrt{d}h_{n}\right)\right)^{2}}{2\sigma^{2}}\lfloor h_{n}N\rfloor^{d}\right)\\
&\geq 1-\frac{2}{h_{n}^{d}}\exp\left(-\frac{\left(l/8-\omega(3r_{n})\right)^{2}}{2\sigma^{2}}\lfloor h_{n}N\rfloor^{d}\right)\\
&\geq 1- \frac{2}{h_{n}^{d}}\exp\left(-\frac{l^{2}}{1024\sigma^{2}}h_{n}^{d}n\right)\\
&\geq  1- \frac{2}{h_{n}^{d}}\exp\left(-\log(n^{2})\right)\\
&\geq 1-\frac{2}{\log(n^{2})n}\geq 1-2r_{n}
\end{align*}
and the result follows.
\end{proof}
 Corollary \ref{coro: upper bound} then easily follows.
\begin{proof}[Proof of Corollary \ref{coro: upper bound}]
Let denote $E_{n}$ the event $d_{H}\left(\hat{D}_{f},D_{f}\right)\leq 2r_{n}$. By Theorem \ref{thm: borne sup}, we have, for sufficiently large $n$,
\begin{align*}
\mathbb{E}\left[d_{H}\left(\hat{D}_{f},D_{f}\right)\right]&=\mathbb{E}\left[d_{H}\left(\hat{D}_{f},D_{f}\right)1_{E_{n}}\right]+\mathbb{E}\left[d_{H}\left(\hat{D}_{f},D_{f}\right)1_{E_{n}^{c}}\right]\\
&\leq 2r_{n}+\sqrt{d}\mathbb{P}\left(E_{n}^{c}\right)\\
&\leq 2r_{n}+2\sqrt{d}r_{n}\\
&\lesssim r_{n}
\end{align*}
\end{proof}
\section{Proof of Theorem \ref{thm: lower bounds}.}
\label{Proof th2}
This section is devoted to the proof of Theorem \ref{thm: lower bounds}. The proof follows standard methods to provide minimax lower bounds, as presented in section 2 of \cite{TsybakovBook}. The idea is, for any $\delta_{n}=o\left(\left(\log(n)/n\right)^{1/d}\right)$, to exhibit a finite collection of signal in $S_{d}(\omega,\mu,R_{\mu},l)$ such that their discontinuities sets are two by two at distance $2\delta_{n}$ for the Hausdorff distance but indistinguishable, with high certainty.
\begin{proof}[Proof of Theorem \ref{thm: lower bounds}]
Let $l>0$ and consider the half space $D=\{x_{1}\leq 1/2\}$ and,
$$f_{0}(x_{1},...,x_{d})=\begin{cases} 
      0 & \text{ if }  x\in D\\
      l & \text{ else}
\end{cases}.$$
Let $h>0$ and $0<m<1/h$ an integer and $P_{m}$ the (filled) regular pyramid of principal vertex $(1/2+2h,1/2,...,1/2)$ and of principal angle $\theta>\arccos(\mu)/2$, with $\mu>0$ and, 
$$f_{h,m}(x_{1},...,x_{d})=\begin{cases} 
      0 & \text{ if }  x\in D\cup P_{m}\\
      l & \text{ else}
\end{cases}.$$
By construction $f_{0}$ and $f_{h,m}$, for all $0<m<1/h$, are in $\mathcal{F}_{d}(\omega,\mu,R_{\mu},l)$ for all $\omega$ such that $\lim\limits_{t\rightarrow 0}\omega(t)=0$ and some $R_{\mu}>0$ (independent of $h$). Taking $\delta_{n}=h$, we also have, for all $0<m\ne m^{'}<1/h$,
$$d_{H}(D_{f_{0}},D_{f_{h,m}})\geq 2\delta_{n}$$
and 
$$d_{H}(D_{f_{0}},D_{f_{h,m}})\geq 2\delta_{n}.$$
For a fixed signal $f$, denote $\mathbb{P}^{n}_{f}$ the product distribution of the observation $X$. From section 2 of \cite{TsybakovBook}, it now suffices to show that if $\delta_{n}=o\left(\left(\log(n)/n\right)^{1/d}\right)$, then,
\begin{equation}
\label{chi2}
  \frac{1}{\left\lfloor \frac{1}{h}\right\rfloor-2}\sum\limits_{0<m<\lfloor1/h\rfloor}\operatorname{KL}\left(\mathbb{P}^{n}_{f_{h,m}}\mathbb{P}^{n}_{f_{0}}\right)
\end{equation}
with $\operatorname{KL}$ the Kullback-Leibler divergence, converges to zero when $n$ tends to $+\infty$.\\\\ 
Note that for sufficiently large $n$ there exist two constants $c$ and $C$ (possibly depending on $d$ and $\mu$) such that,
$$c\operatorname{Vol}(P_{m})n\leq \left|\{x_{i}\in P_{m}\}\right|\leq C\operatorname{Vol}(P_{m})n .$$
Furthermore, there exists a constant $K$ (depending on $d$ and $\mu$) such that $\operatorname{Vol}(P_{m})=Kh^{d}$. We then have,
\begin{align*}
\mathbb{E}_{\mathbb{P}^{n}_{f_{0}}}\left(\operatorname{KL}\left(\mathbb{P}^{n}_{f_{h,m}}\mathbb{P}^{n}_{f_{0}}\right)\right)&=\prod\limits_{i=1}^{n}\operatorname{KL}\left(\mathcal{N}(f_{0}(x_{i}),\sigma^{2}), \mathcal{N}(f_{h,m}(x_{i}),\sigma^{2})\right)\\
&=\prod\limits_{i=1}^{n} \frac{\left(f_{0}(x_{i})-f_{h,m}(x_{i})\right)^{2}}{2\sigma^{2}}\\
&=\prod\limits_{x_{i}\in P_{m}} \frac{\left(f_{0}(x_{i})-f_{h,m}(x_{i})\right)^{2}}{2\sigma^{2}}\\
&\leq \left(\frac{l^{2}}{2\sigma^{2}}\right)^{\left|\{x_{i}\in P_{m}\}\right|}\\
&\leq  \max\left(\left(\frac{l^{2}}{2\sigma^{2}}\right)^{c\operatorname{Vol}(P_{m})n}, \left(\frac{l^{2}}{2\sigma^{2}}\right)^{C\operatorname{Vol}(P_{m})n}\right)\\
&= \max\left(\left(\frac{l^{2}}{2\sigma^{2}}\right)^{cKh^{d}n}, \left(\frac{l^{2}}{2\sigma^{2}}\right)^{CKh^{d}n}\right)
\end{align*}
Hence, if $h=o\left(\left(\log(n)/n\right)^{1/d}\right)$, we have that \ref{chi2} converges to zero. Consequently, if $\delta_{n}=o\left(\left(\log(n)/n\right)^{1/d}\right)$, then $h=o\left(\left(\log(n)/n\right)^{1/d}\right)$ and we get the conclusion.
\end{proof}
\section{Proof of Proposition \ref{prp: PH1}}
\label{Proof prp1}
This section is dedicated to the proof of Proposition \ref{prp: PH1}. To prove this result, we rely on the following lemma that exploits the geometric control provided by Assumption \textbf{A3}. The first part of the claim is Theorem 12 from \cite{kim2020homotopy} and the second part follows easily from their construction. This result involves the notion of deformation retract that we recall here.
\begin{defi}
A subspace $A$ of $X$ is called a \textbf{deformation retract} of $X$ if there is a continuous $F: X \times [0,1] \rightarrow X$ such that for all $x \in X$ and $a \in A$,
\begin{itemize}
    \item $F(x, 0)=x$ 
    \item $F(x, 1) \in A$
    \item $F(a, 1)=a$.
\end{itemize}
The function $F$ is called a (deformation) \textbf{retraction} from $X$ to $A$.
\end{defi}
 Homotopy, and thus homology, is invariant under deformation retract. Thus, a deformation retraction from $X$ to $A$ induces isomorphism between homology groups. More precisely, for all $t\in[0,1]$, $F(.,t)$ induces a morphism $F(.,t)^{\#}:C_{s}(X)\rightarrow C_{S}(X)$ between $s-$chains of $X$ defined by composing each singular $s$-simplex $\sigma: \Delta^s \rightarrow X$ with $F(.,t)$ to get a singular $s$-simplex $F^{\#}(\sigma,t)=F(.,t)\circ\sigma: \Delta^s \rightarrow X$, then extending $F^{\#}(.,t)$ linearly via $F^{\#}\left(\sum_i n_i \sigma_i,t\right)=\sum_i n_i F^{\#}\left(\sigma_i,t\right)=$ $\sum_i n_i F(.,t)\circ \sigma_i$. The morphism $F^{*}(.,t):H_{s}(X)\rightarrow H_{s}(X)$ defined by $[C]\mapsto [F^{\#}(C,t)]$ can be shown to be an isomorphism for all $t\in[0,1]$ \citep[see][pages 110-113]{Hatcher}. In particular, $F^{*}(.,1):H_{s}(X)\rightarrow H_{s}(A)$ is an isomorphism.
\begin{lmm}{\citep[][Theorem 12]{kim2020homotopy}}
\label{deformation retract}
Let $K\subset[0,1]^{d}$, for all $0\leq r<\operatorname{reach}_{\mu}(K)$, there exists $F:B_{2}(K,r)\times [0,1]\rightarrow B_{2}(K,r)$
a deformation retraction from $B_{2}(K,r)$ onto $K$ verifying $R(x,[0,1])\subset B_{2}(x, 2r/\mu^{2})$.
\end{lmm}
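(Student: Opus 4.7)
The plan is to construct the deformation retraction as the (time-rescaled) flow of the generalized gradient $-\nabla_K$ of the distance function, following the classical framework developed by Lieutier and by Chazal--Cohen-Steiner--Lieutier for non-smooth distance functions. The essential analytic input is the hypothesis $r<\operatorname{reach}_\mu(K)$, which, by definition of the $\mu$-reach, guarantees $\|\nabla_K(y)\|_2\geq \mu$ for every $y$ in the open annulus $B_2(K,r)\setminus K$.

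First I would apply Lieutier's construction to obtain, for each $x\in B_2(K,r)\setminus K$, a locally absolutely continuous curve $t\mapsto \phi_t(x)$ whose right-derivative equals $-\nabla_K(\phi_t(x))$ almost everywhere. This is non-trivial because $\nabla_K$ jumps across the medial axis of $K$; the curve is built as a uniform limit of Euler polygonal schemes, using the lower semi-continuity of $y\mapsto \|\nabla_K(y)\|_2$. Along such a flow, $d_K\circ \phi_\cdot(x)$ is absolutely continuous with derivative $-\|\nabla_K\|_2^{2}\leq -\mu^{2}$ as long as $\phi_t(x)\notin K$, so the curve hits $K$ at a finite time $T(x)\leq d_K(x)/\mu^{2}\leq r/\mu^{2}$, and it does so while staying inside $B_2(K,r)$ (the bound $r<\operatorname{reach}_\mu(K)$ prevents $d_K\circ\phi$ from ever increasing). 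Standard arguments then give joint continuity of $(x,t)\mapsto \phi_t(x)$ and continuity of $x\mapsto T(x)$, extending $T$ by zero on $K$.

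Setting $F(x,t):=\phi_{t\,T(x)}(x)$ for $x\in B_2(K,r)\setminus K$ and $F(a,t):=a$ for $a\in K$ then yields a continuous map $B_2(K,r)\times [0,1]\to B_2(K,r)$, and the three defining properties of a deformation retract---$F(x,0)=x$, $F(x,1)\in K$, and $F(a,1)=a$ for $a\in K$---are immediate from the construction.

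The remaining task is the geometric trapping of the orbit: bounding the arc length of the curve $s\mapsto F(x,s)$ by $2r/\mu^{2}$. Reparametrising by arc length, the bound $\|\nabla_K\|_2\geq \mu$ in the annulus implies that $d_K$ decreases at rate at least $\mu$, so a naive continuous-time estimate already gives arc length at most $d_K(x)/\mu\leq r/\mu$. Recovering the precise factor $2/\mu^{2}$ of the statement requires passing through the discrete Euler scheme, where consecutive steps need not be collinear with $\nabla_K$ at the new basepoint; this bookkeeping is, in my view, the main obstacle. Once the step-by-step displacement is bounded using the angle condition encoded by $\|\nabla_K\|_2\geq \mu$, summing over steps and passing to the limit will give the stated inclusion $F(\{x\}\times[0,1])\subset B_2(x,2r/\mu^{2})$, which in turn entails the asserted containment $R(x,[0,1])\subset B_2(x,2r/\mu^{2})$.
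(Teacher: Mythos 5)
The paper does not actually prove this lemma: it is cited verbatim from Theorem~12 of \citet{kim2020homotopy}, so there is no internal proof to compare against. That said, your attempted reconstruction contains a genuine gap.

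The central problem is that the \emph{descending} flow $\dot{x}=-\nabla_K(x)$ is not continuous in its initial condition, so your map $F(x,t)=\phi_{tT(x)}(x)$ does not define a continuous deformation retraction. Lieutier's Euler-scheme construction (and the Chazal--Cohen-Steiner--Lieutier framework you invoke) establishes existence, uniqueness, and joint continuity of the \emph{ascending} flow $\dot{x}=+\nabla_K(x)$, which moves away from $K$ toward the medial axis; trajectories merge there and the landing map is continuous. Running this backwards is fundamentally different: trajectories \emph{branch} at the medial axis. Concretely, take $K=\partial([-1,1]^2)$ (so $\operatorname{reach}_\mu(K)=1$ for any $\mu\le\sqrt{2}/2$, and $K$ displays exactly the corners the paper wants to handle). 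Inside the square near the corner $(1,1)$, a point $(a,b)$ with $a>b$ just below the diagonal flows along $-\nabla_K$ horizontally to $(1,b)\in K$; a point with $b>a$ just above flows vertically to $(a,1)\in K$; and a point exactly on the diagonal has $\nabla_K(x)=(-1/2,-1/2)$ and flows along the diagonal into the corner $(1,1)$. Thus the endpoint map $x\mapsto F(x,1)$ jumps across the diagonal, and no time-reparametrization can repair this, since the landing point itself is discontinuous. The lower semi-continuity of $\|\nabla_K\|$, which is what drives Lieutier's convergence proof, is the right tool for the ascending direction only; it does not give convergence of the Euler scheme for $-\nabla_K$ to a continuous limit.

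The secondary remarks in your last paragraph are also slightly off: your own estimate already gives arc length at most $r/\mu^{2}$ (speed $\|\nabla_K\|\le 1$ and elapsed time $\le d_K(x)/\mu^{2}$), which is \emph{stronger} than the stated $2r/\mu^{2}$, so no extra Euler-scheme bookkeeping is needed for the radius bound --- the hard part is the continuity, not the constant. To actually prove the lemma one needs a different mechanism: the construction of \citet{kim2020homotopy} does not descend directly along $-\nabla_K$ but instead produces a retraction whose tracks are controlled by $\mu$ via a more elaborate argument (essentially in the spirit of double-offset/isotopy results of Chazal--Cohen-Steiner--Lieutier), which circumvents the medial-axis branching. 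Citing that theorem, as the paper does, or reproducing that construction, is the correct route.
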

\begin{proof}[Proof of Proposition \ref{prp: PH1}]
In our context, we suppose $\operatorname{reach}_{\mu}\left(D_{f}\right)>R_{\mu}$, hence assuming $n$ sufficiently large, $D_{f}^{2r_{n}+\kappa_{n}}$ retracts by deformation onto $D_{f}$. By Theorem \ref{thm: borne sup}, we know that, for sufficiently large $n$, with probability $1-2r_{n}$,
$$D_{f}\subset \hat{D}_{f}\subset D_{f}^{2r_{n}}$$
and hence,
$$D_{f}^{\kappa_{n}}\subset \hat{D}_{f}^{\kappa_{n}}\subset D_{f}^{2r_{n}+\kappa_{n}}.$$
Let now denote $i:H_{s}(D_{f})\rightarrow \widehat{H_{s}(D_{f})}$ the map induced by the inclusion $D_{f}\subset \hat{D}_{f}$, $j_{0}:\widehat{H_{s}(D_{f})}\rightarrow H_{s}(D_{f}^{2r_{n}+\kappa_{n}})$ the map induced by the inclusion $\hat{D}_{f}^{\kappa_{n}}\subset D_{f}^{2r_{n}+\kappa_{n}}$, $H_{f}^{*}:H_{s}(D_{f}^{2r_{n}+\kappa_{n}})\times [0,1]\rightarrow H_{s}(D_{f}^{2r_{n}+\kappa_{n}})$ the map induced by the deformation retraction $H_{f}$ of $D_{f}^{2r_{n}+\kappa_{n}}$ on $D_{f}$ from Theorem 12 of \cite{kim2020homotopy} and finally $j:\widehat{H_{s}(D_{f})}\rightarrow H_{s}(D_{f})$ given by $j=H_{f}^{*}(.,1)\circ j_{0}$.\\\\
It now suffices to prove that for all $[C]\in \widehat{H_{s}(D_{f})}$, $i(j([C]))=[C]$ and, for all $[C]\in H_{s}(D_{f})$, $j(i([C]))=[C]$.\\\\
The second assertion is direct, as if $C\in C_{s}(D_{f})$, $H^{*}_{f}([C],1)=\left[H_{f}^{\#}(C,1)\right]=[C]$ and $i$ and $j_{0}$ are induced by inclusions.\\\\
For the first, let $[C]\in\widehat{H_{s}(D_{f})}$, we can then suppose that $C\in C_{s}(\hat{D}_{f})$. Then $H_{f}^{*}([C],1)=\left[H^{\#}_{f}(C,1)\right]=[C^{'}]$. Denotes $\overline{H}_{f}$ the restriction of $H_{f}$ to $H_{f}(\hat{D}_{f},[0,1])$. $\overline{H}_{f}$ is a deformation retract from $H_{f}(\hat{D}_{f},[0,1])$ to $D_{f}$. Hence, by homotopy invariance of singular homology,
$$[C]=\left[\overline{H}^{\#}_{f}(C,1)\right]=\left[H^{\#}_{f}(C,1)\right]=[C^{'}]\in H_{s}\left(H_{f}(\hat{D}_{f},[0,1])\right).$$
By Lemma \ref{deformation retract}, for $n$ sufficiently large such that $\kappa_{n}>2r_{n}/\mu^{2}$, $H_{f}(\hat{D}_{f},[0,1])\subset  \hat{D}_{f}^{\kappa_{n}}$ thus $[C]=[C^{'}]\in H_{s}\left(\hat{D}_{f}^{\kappa_{n}}\right)$ and consequently $[C]=[C^{'}]\in \widehat{H_{s}\left(\hat{D}_{f}\right)}$. Then, the assertion follows as $i$ and $j_{0}$ are induced by inclusions and the result is proved.
\end{proof}
\section{Proofs of Proposition \ref{prp: PH2} and Corollary \ref{coro: PH2}}
\label{proof prp2}
This section is dedicated to the proof of  Proposition \ref{prp: PH2} and Corollary \ref{coro: PH2}. Standardly, to establish these results, it suffices to show that $\mathbb{V}_{f,s}$ and $\hat{\mathbb{V}}_{f,s}$ are $2r_{n}-$interleaved, with probability $1-2r_{n}$, and the results then follows from the stability theorem \citep{Chazal2009}.
\begin{defi}
\label{def:interleaving}
Two persistence modules $\mathbb{V}=\left(\mathbb{V}_{\lambda}\right)_{\lambda\in I\subset \mathbb{R}}$ and $\mathbb{W}=\left(\mathbb{W}_{\lambda}\right)_{\lambda\in I\subset \mathbb{R}}$ are said to be \textbf{$\varepsilon$-interleaved} if there exist two families of applications $\phi=\left(\phi_{\lambda}\right)_{\lambda\in I\subset \mathbb{R}}$ and $\psi=\left(\psi_{\lambda}\right)_{\lambda\in I\subset \mathbb{R}}$ where $\phi_{\lambda}:\mathbb{V}_{\lambda}\rightarrow\mathbb{W}_{\lambda+\varepsilon}$, $\psi_{\lambda}:\mathbb{W}_{\lambda}\rightarrow\mathbb{V}_{\lambda+\varepsilon}$, and for all $\lambda<\lambda^{'}$ the following diagrams commutes,
\begin{center}
\begin{tikzcd}
	{\mathbb{V}_{\lambda }} && {\mathbb{V}_{\lambda ^{'}}} & {\mathbb{W}_{\lambda }} && {\mathbb{W}_{\lambda ^{'}}} \\
	{\mathbb{W}_{\lambda +\varepsilon }} && {\mathbb{W}_{\lambda '+\varepsilon }} & {\mathbb{V}_{\lambda +\varepsilon }} && {\mathbb{V}_{\lambda '+\varepsilon }} \\
	{\mathbb{V}_{\lambda }} && {\mathbb{V}_{\lambda+2\varepsilon }} & {\mathbb{W}_{\lambda}} && {\mathbb{W}_{\lambda+2\varepsilon }} \\
	& {\mathbb{W}_{\lambda +\varepsilon }} &&& {\mathbb{V}_{\lambda +\varepsilon }}
	\arrow["{\phi _{\lambda }}"', from=1-1, to=2-1]
	\arrow["{\phi _{\lambda ^{'}}}", from=1-3, to=2-3]
	\arrow["{w_{\lambda +\varepsilon }^{\lambda ^{'} +\varepsilon }}"', from=2-1, to=2-3]
	\arrow["{w_{\lambda }^{\lambda ^{'}}}", from=1-4, to=1-6]
	\arrow["{\psi_{\lambda }}"', from=1-4, to=2-4]
	\arrow["{v_{\lambda +\varepsilon }^{\lambda ^{'} +\varepsilon }}"', from=2-4, to=2-6]
	\arrow["{\psi_{\lambda ^{'}}}", from=1-6, to=2-6]
	\arrow["{v_{\lambda }^{\lambda ^{'}+2\varepsilon}}", from=3-1, to=3-3]
	\arrow["{\phi _{\lambda }}"', from=3-1, to=4-2]
	\arrow["{\psi_{\lambda+\varepsilon}}"', from=4-2, to=3-3]
	\arrow["{\psi_{\lambda }}"', from=3-4, to=4-5]
	\arrow["{v_{\lambda }^{\lambda ^{'}}}", from=1-1, to=1-3]
	\arrow["{w_{\lambda }^{\lambda ^{'}+2\varepsilon}}", from=3-4, to=3-6]
	\arrow["{\phi _{\lambda+\varepsilon}}"', from=4-5, to=3-6]
\end{tikzcd}
\end{center}
\end{defi}
\begin{customthm}{ (\citealt[]["algebraic stability"]{Chazal2009})}
Let $\mathbb{V}$ and $\mathbb{W}$ two $q-$tame persistence modules (see definition in Appendix \ref{sec: q-tame}). If  $\mathbb{V}$ and $\mathbb{W}$  are $\varepsilon-$interleaved, then,
$$d_{b}\left(\operatorname{dgm}(\mathbb{V}),\operatorname{dgm}(\mathbb{W})\right)\leq \varepsilon.$$
\end{customthm}
\begin{proof}[Proof of Proposition \ref{prp: PH2}]
Let $f\in\mathcal{F}_{d}(\omega,\mu,R_{\mu},l)$. By Theorem \ref{thm: borne sup}, we know that, for sufficiently large $n$, with probability $1-2r_{n}$,
$$D_{f}\subset \hat{D}_{f}\subset D_{f}^{2r_{n}}$$
Thus, for all $\beta\geq 0$,
$$D_{f}^{\beta}\subset \hat{D}_{f}^{\beta}\subset D_{f}^{\beta+2r_{n}}$$
Consequently, the persistent modules $\mathbb{V}_{f,s}$ and $\hat{\mathbb{V}}_{f,s}$ are $2r_{n}$ interleaved. Furthermore, we can prove that both those modules are $q-$tame (see Appendix \ref{sec: q-tame}). The stability theorem \cite{Chazal2009} then gives,
$$d_{b}\left(\operatorname{dgm}(\mathbb{V}_{f,s}),\operatorname{dgm}(\hat{\mathbb{V}}_{f,s})\right)\leq 2r_{n}$$
and the result is proved.
\end{proof}
 The bound in expectation stated by Corollary \ref{coro: PH2} then follows by elementary calculations.
\begin{proof}[Proof of Corollary \ref{coro: PH2}]
By proposition \ref{prp: PH1}, we have,
\begin{align*}
&\mathbb{E}\left(\sup\limits_{f\in\mathcal{F}_{d}(\omega,\mu,R_{\mu},l)} d_{b}\left(\operatorname{dgm}(\mathbb{V}_{f,s}),\operatorname{dgm}(\hat{\mathbb{V}}_{f,s})\right)\right)\\
&\leq 2r_{n} \mathbb{P}\left(\sup\limits_{f\in\mathcal{F}_{d}(\omega,\mu,R_{\mu},l)} d_{b}\left(\operatorname{dgm}(\mathbb{V}_{f,s}),\operatorname{dgm}(\hat{\mathbb{V}}_{f,s})\right)\leq 2r_{n}\right)\\
&\quad+\sqrt{d}\mathbb{P}\left(\sup\limits_{f\in\mathcal{F}_{d}(\omega,\mu,R_{\mu},l)} d_{b}\left(\operatorname{dgm}(\mathbb{V}_{f,s}),\operatorname{dgm}(\hat{\mathbb{V}}_{f,s})\right)>2r_{n}\right)\\
&\leq  2r_{n}+ 4\sqrt{d}r_{n}\\
&\leq (2+4\sqrt{d})r_{n}.
\end{align*}
\end{proof}
\section{The diagram $\operatorname{dgm}(\mathbb{V}_{f,s})$ and $\operatorname{dgm}(\hat{\mathbb{V}}_{f,s})$ are well-defined}
\label{sec: q-tame}
This section justifies that both the persistence diagram of $\mathbb{V}_{f,s}$ and $\hat{\mathbb{V}}_{f,s}$ are well-defined. To do so, it suffices to check that those persistence modules are $q-$tame \citep[see][]{chazal2013}, i.e. show that for all $0\leq\beta<\beta^{'}$,
$$v_{\beta}^{\beta^{'}}:H_{s}(D_{f}^{\beta})\rightarrow H_{s}(D_{f}^{\beta^{'}})$$
induced by the inclusion $D_{f}^{\beta}\subset D_{f}^{\beta^{'}}$ and,
$$\hat{v}_{\beta}^{\beta^{'}}:H_{s}(\hat{D}_{f}^{\beta})\rightarrow H_{s}(\hat{D}_{f}^{\beta^{'}})$$
induced by the inclusion $\hat{D}_{f}^{\beta}\subset \hat{D}_{f}^{\beta^{'}}$ are of finite rank.\\\\
Let $0\leq\beta<\beta^{'}$. As $D_{f}^{\beta}$ is a compact set and $[0,1]^{d}$ triangulable, $D_{f}^{\beta}$ is covered by finitely many cells of the triangulation. Hence, there is a finite simplicial complex $K$ such that,
$$D_{f}^{\beta}\subset K\subset D_{f}^{\beta^{'}}.$$
Hence, $v_{\beta}^{\beta^{'}}$ factors through the finite dimensional space $H_{s}(K)$ and is then of finite rank by Theorem 1.1 of \cite{Crawley2012}. And thus, $\mathbb{V}_{f,s}$ is $q-$tame. As $\hat{D}_{f}^{\beta}$ is also compact, the same reasoning gives also the $q-$tameness of $\hat{\mathbb{V}}_{f,s}$.
\section*{Acknowledgements}
The author would like to thank Frédéric Chazal and Pascal Massart for our (many) helpful discussions. The author acknowledge the support of the ANR TopAI chair (ANR–19–CHIA–0001).
\end{document}